\newdimen\bibspace
\numberwithin{equation}{section}
\newtheorem{theorem}{Theorem}[section]
\newtheorem{lemma}[theorem]{Lemma}
\newtheorem{corollary}[theorem]{Corollary}
\newtheorem{remark}[theorem]{Remark}
\def\<{\langle}
\def\>{\rangle}
\def\hat{\widehat}
\def\XXint#1#2#3{{\setbox0=\hbox{$#1{#2#3}{\int}$ }
\vcenter{\hbox{$#2#3$ }}\kern-.6\wd0}}
\def\hat{\widehat}
\begin{document}

\title{Asymptotic behavior of solutions to the Monge--Amp\`ere equations with slow convergence rate at infinity}

\author{Zixiao Liu and Jiguang Bao\footnote{
 Supported by the National Key Research and Development Program of China (No. 2020YFA0712904) and
 National Natural Science Foundation of China (No. 11871102 and No. 11631002).}}
\date{\today}

\maketitle

\begin{abstract}
We consider the asymptotic behavior of solutions to the Monge--Amp\`ere equations with slow convergence rate at infinity and fulfill previous results under faster convergence rate by Bao--Li--Zhang [Calc. Var PDE. 52(2015). pp. 39-63]. Different from known results, we  obtain the limit of Hessian and/or gradient of solution at infinity relying on the  convergence rate.  The basic idea is to use a revised level set method, the spherical harmonic expansion and the iteration method.
 \\[1mm]
 {\textbf{Keywords:}} Monge--Amp\`ere equation,  Asymptotic behaviour, Slow convergence rate.\\[1mm]
{\textbf{MSC(2020):}} 35J96, 35B40, 35J25.
\end{abstract}

\section{Introduction}

We consider convex viscosity solutions to the Monge--Amp\`ere equations
\begin{equation}\label{equ:MA}
\det\left(D^{2} u\right)=f(x) \quad\text{in } \mathbb{R}^{n},
\end{equation}
where $D^2u$ denotes the Hessian matrix of $u$ and $f\in C^m(\mathbb R^n)$ satisfies

\begin{equation}\label{equ:cond-f}
\limsup _{|x| \rightarrow \infty}|x|^{\zeta+k}\left|D^{k}(f(x)-f(\infty))\right|<\infty, \quad \forall~ k=0,1,2, \cdots, m
\end{equation}
for some $f(\infty)>0$,  $\zeta>0$ and $m\geq 2$.

Equation \eqref{equ:MA} with $f$ being a constant origins from 2-dimensional minimal surfaces \cite{Jorgens}, improper affine geometry \cite{Calabi,Pogorelov} etc.
The importance of $f$ not being identical to a constant is mentioned in Calabi \cite{Calabi}, Trudinger--Wang \cite{Trudinger-Wang-MA-GeomeAppl} etc. As pointed out in \cite{Nirenberg-WeylnMinkovskiProb,Bao-Li-Zhang-ExteriorBerns-MA,Chong-Rongli-Bao-SecondBoundary-SPL,Liu-Bao-2021-Expansion-LagMeanC}, such equations are also related to the Weyl and Minkovski problems, the Plateau type problems, affine geometry and the mean curvature equations of gradient graphs in weighted space etc.

When $f(x)\equiv f(\infty)>0$, the theorem by J\"orgens \cite{Jorgens} ($n=2$), Calabi \cite{Calabi} ($n\leq 5$) and Pogorelov \cite{Pogorelov} ($n\geq 2$) states that any classical convex solution of \eqref{equ:MA} must be a quadratic polynomial. For $n=2$, a classical solution of \eqref{equ:MA} is either convex or concave and thus the result hold without the convexity assumptions. For different proofs and extensions, we refer to Cheng--Yau \cite{ChengandYau}, Caffarelli \cite{Caffarelli-InteriorEstimates-MA}, Jost--Xin \cite{JostandXin}, Fu \cite{Fu-Bernstein-SPL}, Li--Xu--Simon--Jia \cite{Book-Li-Xu-Simon-Jia-MA}, Warren \cite{Warren-Calibrations-MA} etc.

When $f(x)-f(\infty)$ have compact support, Caffarelli--Li \cite{Caffarelli-Li-ExtensionJCP} proved that any convex viscosity solution must be asymptotic to quadratic polynomial at infinity (with additional $\ln$-term when $n=2$). Such asymptotic behavior has been refined further with an expansion of error at infinity by Hong \cite{Hong-Remark-MA} (for $|x|^{2-n}$ order with $n\geq 3$) and Liu--Bao \cite{Liu-Bao-2020-ExpansionSPL,Liu-Bao-2021-Dim2} (for higher order with $n=2$ and $n\geq 3$).

When $f(x)-f(\infty)$ vanish at infinity, Bao--Li--Zhang \cite{Bao-Li-Zhang-ExteriorBerns-MA} proved the following asymptotic behavior result, which is an extension to previous results by J\"orgens \cite{Jorgens}--Calabi \cite{Calabi}--Pogorelov \cite{Pogorelov} and Caffarelli--Li \cite{Caffarelli-Li-ExtensionJCP}.

Hereinafter, we let $\mathtt{Sym}(n)$ denote the set of symmetric $n\times n$ matrix,  $x^T$ denote the transpose of vector $x\in\mathbb R^n$ and convex viscosity solutions are defined as in \cite{Book-Caffarelli-Cabre-FullyNonlinear,Caffarelli-Li-ExtensionJCP}. We will write $\varphi(x)=O_m(|x|^{-k_1}(\ln|x|)^{k_2})$ with $m\in\mathbb N, k_1, k_2\geq 0$ if $\varphi$ satisfies
\[
|D^k\varphi(x)|=O(|x|^{-k_1-k}(\ln|x|)^{k_2})
\quad\text{as}~|x|\rightarrow+\infty
\] for all $0\leq k\leq m$.
\begin{theorem}[Bao--Li--Zhang\cite{Bao-Li-Zhang-ExteriorBerns-MA}]
  Let $u\in C^0(\mathbb R^n)$ be a convex viscosity solution of \eqref{equ:MA} where $f\in C^m(\mathbb R^n)$ satisfies \eqref{equ:cond-f} with $\zeta>2$ and $m\geq 3$. If $n\geq 3$, there exist $0<A\in\mathtt{Sym}(n)$ satisfying $\det A=f(\infty)$, $b\in\mathbb R^n$ and $c\in\mathbb R$ such that
  \begin{equation}\label{equ:asym-ori-n>3}
  u(x)-\left(\frac{1}{2}x^TAx+ b \cdot x+c\right)=\left\{
  \begin{array}{lll}
  O_{m+1}(|x|^{2-\min\{\zeta,n\}}), & \text{if }\zeta\not=n,\\
  O_{m+1}(|x|^{2-n}(\ln|x|)), & \text{if }\zeta=n,\\
  \end{array}
  \right.
  \end{equation}
  as $|x|\rightarrow\infty$. If $n=2$, there exist $A, b ,c$ as above and $d=\frac{1}{4\pi}\int_{\mathbb R^2}(f(x)-1)\mathrm dx$ such that
  \begin{equation}\label{equ:asym-ori-n=2}
  u(x)-\left(\frac{1}{2}x^TAx+ b \cdot x+c+d\ln(x^TAx)\right)=
  O_{m+1}(|x|^{2-\bar\zeta})
  \end{equation}
  as $|x|\rightarrow\infty$ for any $\bar\zeta<\min\{\zeta,3\}$.
\end{theorem}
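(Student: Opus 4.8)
\noindent\emph{Proof proposal.}
The plan is to combine the three ingredients named in the abstract in order: a level-set argument to produce rough quadratic asymptotics and normalize the leading term, a spherical-harmonic expansion of the error to reduce to decoupled radial ODEs, and a finite iteration to upgrade the decay to the sharp rate. After a scaling we may assume $f(\infty)=1$. First I would run the Caffarelli--Li exterior analysis: the sub-level sets $\{u<t\}$ are bounded convex bodies whose normalized shapes converge to an ellipsoid as $t\to\infty$, and Caffarelli's interior $C^{2,\alpha}$/Pogorelov estimates applied to the blow-downs $u_R(x):=u(Rx)/R^2$ (which solve $\det D^2u_R=f(Rx)\to1$), together with the J\"orgens--Calabi--Pogorelov theorem, force the blow-down limit to be quadratic; a comparison argument shows the limit is independent of the scaling sequence. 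This gives $0<A\in\mathtt{Sym}(n)$ with $\det A=1$, $b\in\mathbb R^n$, $c\in\mathbb R$ and $u(x)=\tfrac12 x^TAx+b\cdot x+c+o(|x|^2)$, and an affine change of variables of determinant $1$ taking $A$ to $I$ (and $f$ to a function still satisfying \eqref{equ:cond-f}) reduces everything to $A=I$.

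Next, set $w:=u-(\tfrac12|x|^2+b\cdot x+c)$, so $w=o(|x|^2)$ and
\[ \det(I+D^2w)=1+\Delta w+(\text{terms of order }\ge 2\text{ in }D^2w)=f. \]
Interior Schauder and Evans--Krylov estimates on unit balls centered at points with $|x|=r$, together with $w=o(r^2)$, give $|D^kw(x)|=o(r^{2-k})$ for $0\le k\le m+2$; hence once any algebraic decay rate for $D^2w$ is available, the higher-order terms decay strictly faster than $\Delta w$, and $\Delta w=(f-1)+(\text{lower order})$ with $f-1=O_m(|x|^{-\zeta})$.

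On $\{|x|>R_0\}$ I would expand $w(r\theta)=\sum_{k,\ell}a_{k\ell}(r)Y_{k\ell}(\theta)$ in spherical harmonics. Projecting $\Delta w=g$ onto the $k$-th eigenspace of the spherical Laplacian yields
\[ a_{k\ell}''+\frac{n-1}{r}a_{k\ell}'-\frac{k(k+n-2)}{r^2}a_{k\ell}=g_{k\ell}(r), \]
with homogeneous solutions $r^k$ and $r^{2-n-k}$. The modes $k=0,1$ with growing homogeneous solution are already absorbed into the quadratic part, and variation of parameters bounds each remaining $a_{k\ell}$, modulo a decaying multiple of $r^{2-n-k}$, by $O(r^{2-\min\{\zeta,n\}})$ --- with an extra factor $\ln r$ exactly when $\zeta=n$, the resonance between the $r^{-\zeta}$ source and the decaying mode $r^{2-n}$. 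Feeding this back into the higher-order terms of the previous step and iterating finitely many times, the bound stabilizes at $w=O_{m+1}(|x|^{2-\min\{\zeta,n\}})$, respectively $O_{m+1}(|x|^{2-n}\ln|x|)$ when $\zeta=n$; radial barriers convert the modewise estimates into the pointwise statement and fix the constant $c$. When $n=2$ the $k=0$ mode instead has homogeneous solutions $1$ and $\ln r$, so $a_{00}$ acquires a genuine logarithm whose coefficient, obtained by testing $g_{00}$ against the planar fundamental solution, equals $\frac1{4\pi}\int_{\mathbb R^2}(f-1)\,\mathrm dx=d$ (the remaining contributions being integrable and of lower order); undoing the normalization turns $\ln|x|$ into $d\ln(x^TAx)$, and the next mode $k=1$, of size $r^{-1}=r^{2-3}$, together with the source limits the remainder to $O_{m+1}(|x|^{2-\bar\zeta})$ for every $\bar\zeta<\min\{\zeta,3\}$.

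\emph{Main obstacle.} The delicate point is making the iteration actually converge to the sharp exponent: at each stage one must verify that the higher-order terms of the Monge--Amp\`ere operator and the linearization error decay strictly faster than the current linear bound, so the bootstrap does not stall, and the borderline cases $\zeta=n$ and $n=2$ require a separate, careful treatment of the logarithmic modes (including the precise value of $d$). A second subtlety is the uniqueness of the blow-down limit in the first step, which must be established before one may speak of ``the'' quadratic polynomial.
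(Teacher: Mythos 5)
Your overall scheme --- rough quadratic normalization via level sets, linearization plus a spherical-harmonic/ODE analysis of the error, and a finite bootstrap, with a separate treatment of the logarithmic modes when $n=2$ --- is indeed the strategy of Bao--Li--Zhang, which this paper quotes and whose machinery it reuses in Sections \ref{seclabel-QuadraticTerm}--\ref{seclabel-n=2}. But there is a genuine gap at the hinge of your argument: everything after your first paragraph needs an \emph{algebraic} rate, i.e.\ an estimate $|u(x)-\frac12 x^TAx|\le C|x|^{2-\epsilon}$ for some explicit $\epsilon>0$ (equivalently $|D^2w|=O(|x|^{-\epsilon})$ after scaling and interior Caffarelli/Schauder estimates), whereas your first step only yields $o(|x|^2)$. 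From $o(|x|^2)$ the blow-down/J\"orgens--Calabi--Pogorelov argument gives $|D^kw|=o(|x|^{2-k})$, so the quadratic and higher terms in $\det(I+D^2w)-1-\Delta w$ are merely $o(1)\cdot|D^2w|$: they need not decay faster than $f-1=O(|x|^{-\zeta})$, the sources $g_{k\ell}$ in your radial ODEs need not decay algebraically, and neither the variation-of-parameters bounds nor the bootstrap can start. You acknowledge this when you write ``once any algebraic decay rate for $D^2w$ is available,'' but that rate is precisely the hard quantitative input: it is Bao--Li--Zhang's Theorem 1.2 (cf.\ Caffarelli--Li, Prop.\ 3.3), stated here as Theorem \ref{thm:quadratic-ori} and estimate \eqref{equ-roughConverge}, and it is proved not by blow-down compactness but by comparing the John-normalized sections $\{u<M\}$ with solutions of $\det D^2\bar\xi=1$ and showing the associated affine normalizations $T_k$ form a Cauchy sequence with geometric error $2^{-\epsilon k/2}$; the same construction settles the uniqueness of the limiting $A$ that you list as your second obstacle.

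A smaller but real point: $b$, $c$ (and, for $n=2$, $d$ and the $O(|x|^{-1})$ mode) are of lower order than $|x|^2$ and cannot be read off from the blow-down; in the actual proof $b$ is captured by a Liouville-type theorem at infinity for uniformly elliptic linear equations applied to $D_ev$ (cf.\ Theorem \ref{thm:linearLimit}), after Hessian decay has been extracted from barriers, and $c$ is obtained at the next stage; your bookkeeping of the $k=0,1$ spherical modes can play this role, but again only once the algebraic decay above is in hand. Note also that in $n=2$ the linear Liouville theorem may fail, which is exactly why the two-dimensional case is run through the iteration lemma (requiring $m\ge 3$) rather than through the argument you describe uniformly in $n$; your sketch should make that distinction, and the resulting $\bar\zeta<\min\{\zeta,3\}$ loss coming from the $k=1$ mode, explicit.
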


\begin{remark}\label{rem:lostCases}
  As discussed in Theorem 1.1 of \cite{Liu-Bao-2021-Expansion-LagMeanC}, the original statement in Bao--Li--Zhang \cite{Bao-Li-Zhang-ExteriorBerns-MA}  dropped the possibility that when $\zeta=n$ in $n\geq 3$ case, which leads to the difference between \eqref{equ:asym-ori-n>3} and (1.2) in \cite{Bao-Li-Zhang-ExteriorBerns-MA}.
  Furthermore, from (2.33) in \cite{Bao-Li-Zhang-ExteriorBerns-MA}, it seems that they also dropped the term of $O(|x|^{-1})$ order in spherical harmonic expansion at infinity in \eqref{equ:asym-ori-n=2}, which makes the range of $\bar\zeta$ different from the original statement in \cite{Bao-Li-Zhang-ExteriorBerns-MA}. See also Theorem 1.1 of \cite{Liu-Bao-2021-Dim2-MeanCur}.
\end{remark}

In previous work by the authors \cite{Liu-Bao-2021-Expansion-LagMeanC,Liu-Bao-2021-Dim2-MeanCur}, when $n\geq 3$ the requirement  $m\geq 3$ is reduced into $m\geq 2$ and when $n=2$ the asymptotic behavior \eqref{equ:asym-ori-n=2} is further refined into
\begin{equation}\label{equ-optimality}
\begin{array}{llllll}
&\displaystyle  u(x)-\left(\frac{1}{2}x^TAx+ b \cdot x+c+d\ln(x^TAx)\right)\\
=&\displaystyle
  \left\{
  \begin{array}{lllll}
O_{m+1}\left(|x|^{2-\min\{\zeta,3\}}\right), & \text{if }\zeta\not=3,\\
O_{m+1}\left(|x|^{-1}(\ln|x|)\right), & \text{if }\zeta=3,\\
\end{array}
  \right.\\
\end{array}
  \end{equation}
as $|x|\rightarrow\infty$. Furthermore, higher order asymptotic expansions when $\zeta$ is larger are also given in \cite{Liu-Bao-2021-Expansion-LagMeanC,Liu-Bao-2021-Dim2-MeanCur}.

As pointed out by Bao--Li--Zhang \cite{Bao-Li-Zhang-ExteriorBerns-MA}, by considering radially symmetric solutions, $\zeta>2$ is optimal such that $u$ converge to a quadratic function (with additional $\ln$-term when $n=2$) at infinity. See also the example in Section \ref{seclabel-Radial} below.

We consider under slow convergence speed $0<\zeta\leq 2$ and prove
the asymptotic behavior at infinity. The statement is separated into two parts since the requirement on the regularity of $f$ when $n\geq 3$ is different from $n=2$ case.

\begin{theorem}\label{thm:main-n>3}
  Let $u\in C^0(\mathbb R^n)$ be a convex viscosity solution of \eqref{equ:MA} where $n\geq 3$ and $f\in C^m(\mathbb R^n)$ satisfies \eqref{equ:cond-f} for some $0<\zeta\leq 2$ and $m\geq 2$. Then there exist $0<A\in\mathtt{Sym}(n)$ satisfying $\det A=f(\infty)$ and $ b \in\mathbb R^n$ such that
  \begin{equation}\label{equ-beha-n>3}
  u(x)-\frac{1}{2}x^TAx=\left\{
  \begin{array}{lllll}
    O_{m+1}(|x|^{2-\zeta}), & \text{if }0<\zeta<1,\\
    O_{m+1}(|x|(\ln|x|)), & \text{if }\zeta=1,\\
     b \cdot x+O_{m+1}(|x|^{2-\zeta}), & \text{if }1<\zeta<2,\\
     b \cdot x+O_{m+1}(\ln|x|), & \text{if }\zeta=2,\\
  \end{array}
  \right.
  \end{equation}
  as $|x|\rightarrow\infty$.
\end{theorem}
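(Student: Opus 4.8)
\noindent\emph{Proof proposal.} The plan has two stages: first peel off the leading quadratic polynomial by a level set argument, and then run an iteration driven by the linearized equation and a spherical harmonic expansion to identify the sharp remainder. The restriction $n\ge3$ will be used precisely so that the decaying spherical harmonics do decay and the resonance structure below is as stated.

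First I would establish quadratic growth: since \eqref{equ:cond-f} forces $f$ to lie between two positive constants outside a large ball, a Caffarelli--Li type comparison gives $c|x|^{2}\le u(x)-\ell(x)\le C|x|^{2}$ for large $|x|$ and a supporting affine function $\ell$. Then I would apply the (revised) level set method of Caffarelli--Li and Bao--Li--Zhang to the sections $S_{t}=\{u<t\}$: after normalization by affine maps the sections converge to a Euclidean ball, and because $\|f-f(\infty)\|_{L^{\infty}(\mathbb R^{n}\setminus B_{R})}=O(R^{-\zeta})$ with $\sum_{j}2^{-j\zeta}<\infty$, the normalizing matrices form a Cauchy family converging to some $0<A\in\mathtt{Sym}(n)$ with $\det A=f(\infty)$. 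After the linear change of variables $x\mapsto A^{-1/2}x$ one may assume $A=I$. Feeding the resulting $C^{0}$-closeness of $u$ to $\tfrac12|x|^{2}$ into Caffarelli's interior $C^{2,\alpha}$ estimates (valid since $f$ is positive and Hölder near infinity) and then into Schauder estimates (since $f\in C^{m}$) would give $u(x)-\tfrac12|x|^{2}=O_{m+1}(|x|^{2-\alpha_{0}})$ as $|x|\to\infty$ for some $\alpha_{0}=\alpha_{0}(n,\zeta)\in(0,1)$. I expect the quantitative form of this step — extracting an honest polynomial rate $\alpha_{0}$ from the level set method under the slow decay $0<\zeta\le2$ — to be the main obstacle; in the regime $\zeta>2$ of Bao--Li--Zhang the sections converge faster and this difficulty does not arise.

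Next, with $v:=u-\tfrac12|x|^{2}$ and $\det A=f(\infty)=1$, expanding the determinant in $\det(I+D^{2}v)=f$ yields
$$\Delta v=\big(f-f(\infty)\big)-\sum_{k=2}^{n}\sigma_{k}(D^{2}v)=:g,$$
with $\sigma_{k}$ the $k$-th elementary symmetric function of the eigenvalues of $D^{2}v$. If $v=O_{m+1}(|x|^{2-\alpha})$ for some $0<\alpha\le2$, then $D^{2}v=O_{m-1}(|x|^{-\alpha})$, so each $\sigma_{k}(D^{2}v)$ with $k\ge2$ is $O_{m-1}(|x|^{-2\alpha})$, and since $f-f(\infty)=O_{m}(|x|^{-\zeta})$ one gets $g=O_{m-1}(|x|^{-\mu})$ with $\mu=\min\{\zeta,2\alpha\}$. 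Expanding $g=\sum_{k\ge0}g_{k}(r)Y_{k}$ and $v=\sum_{k\ge0}v_{k}(r)Y_{k}$ and solving the Euler-type ODEs $v_{k}''+\tfrac{n-1}{r}v_{k}'-\tfrac{k(k+n-2)}{r^{2}}v_{k}=g_{k}$ on $\{|x|>R_{0}\}$, whose homogeneous solutions are $r^{k}Y_{k}$ and $r^{-(k+n-2)}Y_{k}$, and using $v=o(|x|^{2})$ to rule out growing modes beyond $k=0,1$, one obtains
$$v(x)=c+b\cdot x+O\big(|x|^{2-n}\big)+v_{\mathrm{part}}(x),\qquad v_{\mathrm{part}}=O_{m+1}\big(|x|^{2-\mu}\big),$$
unless $2-\mu$ equals an indicial exponent $k\in\{0,1\}$, that is $\mu\in\{1,2\}$, in which case a logarithm appears, $v_{\mathrm{part}}=O_{m+1}(|x|^{2-\mu}\ln|x|)$. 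Thus the decay exponent improves from $\alpha$ to $\mu=\min\{\zeta,2\alpha\}$, and the recursion $\alpha_{k}=\min\{\zeta,2\alpha_{k-1}\}$ reaches $\zeta$ after finitely many steps (any transient logarithmic factor produced when $2\alpha_{k-1}$ accidentally hits $1$ or $2$ is cleared at the next step since $f-f(\infty)$ then dominates), stabilizing with $v=O_{m+1}(|x|^{2-\zeta})$ when $\zeta\notin\{1,2\}$ and with the extra $\ln|x|$ factor when $\zeta\in\{1,2\}$.

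Finally, transforming back by the fixed linear map (which preserves orders of growth, turns $\tfrac12|x|^{2}$ into $\tfrac12x^{T}Ax$ and a linear term into a linear term) and absorbing lower order terms into the remainder gives the four regimes: for $0<\zeta<1$ one has $2-\zeta>1$, so $|x|^{2-\zeta}$ dominates $b\cdot x$, the constant and the decaying harmonics, giving $u-\tfrac12x^{T}Ax=O_{m+1}(|x|^{2-\zeta})$; for $\zeta=1$ the resonant remainder $|x|\ln|x|$ still dominates the linear and constant terms, giving $O_{m+1}(|x|\ln|x|)$; for $1<\zeta<2$ one has $2-\zeta<1$, so $|x|^{2-\zeta}$ dominates the constant and (as $\zeta<n$) the decaying harmonics but not $b\cdot x$, giving $b\cdot x+O_{m+1}(|x|^{2-\zeta})$; and for $\zeta=2$ the resonant $\ln|x|$ remainder dominates the constant and the decaying harmonics but not $b\cdot x$, giving $b\cdot x+O_{m+1}(\ln|x|)$. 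The accompanying derivative bounds $O_{m+1}$ are obtained in parallel by differentiating the explicit ODE solutions and applying interior estimates to the harmonic pieces, which is where the hypothesis $m\ge2$ enters.
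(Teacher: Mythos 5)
Your proposal is essentially correct, but it follows a genuinely different route from the paper's proof of Theorem \ref{thm:main-n>3}. Stage one (rough quadratic approximation via the level set method, then upgraded to $O_{m+1}(|x|^{2-\epsilon})$ by scaled interior estimates) coincides with the paper's Theorem \ref{thm:quadratic-ori} and Lemma \ref{lem:initial-n>3}; the obstacle you flag is resolved there simply by rerunning the Bao--Li--Zhang argument with $\epsilon<\min\{\frac{1}{10},\frac{\zeta}{3}\}$ under the integral condition \eqref{equ:cond-f-integ}. The core of the argument differs: the paper does \emph{not} iterate in $n\geq 3$. It differentiates the equation twice, uses that $D_{ee}v$ is a supersolution-type object for the linearized operator ($\widehat{a_{ij}}D_{ijee}v\geq D_{ee}f$) together with barrier functions to get the sharp Hessian decay $|D^2v|=O(|x|^{-\min\{\zeta,n-2\}})$ in one shot, and for $1<\zeta\leq 2$ extracts $b$ by applying the exterior Liouville-type Theorem \ref{thm:linearLimit} to each $D_ev$ (whose right-hand side $D_ef=O(|x|^{-\zeta-1})$ decays faster than quadratically); only then does it invoke the Poisson/spherical-harmonic lemmas once. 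Your scheme instead doubles the decay exponent by iterating the Poisson-equation step, $\alpha\mapsto\min\{\zeta,2\alpha\}$, and reads off $b\cdot x$ as the degree-one growing harmonic in the final decomposition; this is essentially the strategy the paper reserves for $n=2$ (Lemma \ref{lem:initial-n=2}), transplanted to $n\geq 3$, and it buys you independence from both the barrier construction and Theorem \ref{thm:linearLimit}, at the cost of more bookkeeping of exponents and transient logarithms.

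One point in your write-up needs tightening, though it is fixable by the paper's own machinery and is not a fatal gap. At each pass you claim $v_{\mathrm{part}}=O_{m+1}(|x|^{2-\mu})$ ``by differentiating the explicit ODE solutions''; but the particular solution built from the spherical harmonic expansion only inherits one more derivative than the right-hand side $g$ (as in Lemma \ref{lem-existence-fastd-deri}), and $g$ itself already costs two derivatives of $v$, so a naive iteration loses derivatives and, starting from $O_{m+1}$ with $m\geq 2$, could run out before the exponent reaches $\zeta$ (the number of doublings can be as large as about $\log_2(\zeta/\epsilon)$ with $\epsilon<\frac{1}{10}$). The correct fix, which the paper applies after every improvement, is to return to the Monge--Amp\`ere equation: once the improved $C^0$ rate for $w=v-b\cdot x-c$ (or $v$ itself in the superlinear regimes) is known, the scaled interior Schauder argument of Lemma \ref{lem:initial-n>3}, using \eqref{equ:boundedC2Alpha} and $\Vert f_R-f(\infty)\Vert_{C^{k,\alpha}}\leq CR^{-\zeta}$ for $k\leq m-1$, restores the full $O_{m+1}$ decay at that rate. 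With that replacement for your derivative-bound step, your iteration closes and yields the four regimes of \eqref{equ-beha-n>3} exactly as you describe.
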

\begin{theorem}\label{thm:main-n=2}
  Let $u\in C^0(\mathbb R^2)$ be a convex viscosity solution of \eqref{equ:MA} where $f\in C^m(\mathbb R^2)$ satisfies \eqref{equ:cond-f} for some $0<\zeta\leq 2$ and $m\geq 3$. Then there exist $0<A\in\mathtt{Sym}(2)$ satisfying $\det A=f(\infty)$ and $ b \in\mathbb R^2$ such that
  \begin{equation}\label{equ-beha-n=2}
  u(x)-\frac{1}{2}x^TAx=\left\{
  \begin{array}{lllll}
    O_{m+1}(|x|^{2-\zeta}), & \text{if }0<\zeta<1,\\
    O_{m+1}(|x|(\ln|x|)), & \text{if }\zeta=1,\\
     b \cdot x+O_{m+1}(|x|^{2-\zeta}), & \text{if }1<\zeta<2,\\
     b \cdot x+O_{m+1}((\ln|x|)^2), & \text{if }\zeta=2,\\
  \end{array}
  \right.
  \end{equation}
  as $|x|\rightarrow\infty$.
\end{theorem}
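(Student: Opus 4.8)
The plan has three stages: identify the limiting Hessian $A$ together with the soft expansion $u=\tfrac12x^{T}Ax+o(|x|^{2})$; bootstrap this to the sharp algebraic rate by iterating on the linearized equation; and isolate the degree‑one part through a Fourier (spherical harmonic) decomposition on circles — the revised level‑set analysis, the harmonic expansion and the iteration interacting in the last two stages. \emph{Step 1 (soft asymptotics).} After subtracting an affine function we may assume $u\ge u(0)=0$, $Du(0)=0$; since $0<c\le f\le C$ near infinity, comparison with radial solutions of $\det D^{2}=\mathrm{const}$ and Aleksandrov‑type estimates give two‑sided quadratic bounds on $u$, and $f\in C^{m}$ positive yields $u\in C^{m+1,\alpha}_{\mathrm{loc}}$ by Caffarelli's theory. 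The blow‑downs $v_{R}(x):=R^{-2}u(Rx)$ solve $\det D^{2}v_{R}=f(Rx)\to f(\infty)$ locally uniformly off the origin, hence are uniformly $C^{2,\alpha}$ on compacts of $\mathbb R^{2}\setminus\{0\}$; a subsequential limit $v_{\infty}$ is convex on $\mathbb R^{2}$ and carries no atom at the origin (its Monge--Amp\`ere mass on $B_{\rho}$ equals $f(\infty)\pi\rho^{2}$ because $\int_{B_{\rho}}(f-f(\infty))=o(\rho^{2})$), so $\det D^{2}v_{\infty}=f(\infty)$ on all of $\mathbb R^{2}$; the J\"orgens--Calabi--Pogorelov theorem forces $v_{\infty}=\tfrac12x^{T}Ax$ with $\det A=f(\infty)$, and a standard argument makes $A$ independent of the subsequence. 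Thus $u(x)=\tfrac12x^{T}Ax+o(|x|^{2})$, and rescaled Schauder estimates on dyadic annuli give $|D^{k}(u-\tfrac12x^{T}Ax)|=o(|x|^{2-k})$ for $0\le k\le m+1$.

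\emph{Step 2 (sharp decay via the revised level‑set iteration).} Normalizing $A=I$, put $w:=u-\tfrac12|x|^{2}$, so $\Delta w=(f-1)+R$ with $f-1=O_{m}(|x|^{-\zeta})$ and $R=O(|D^{2}w|^{2})$ arising from $\det(I+D^{2}w)-1-\mathrm{tr}\,D^{2}w=O(|D^{2}w|^{2})$. The revised level‑set method launches the iteration: from $\int_{\{u<t\}}f=|\partial u(\{u<t\})|$ and comparing $\{u<t\}$ with the ellipsoid $\{x^{T}Ax<2t\}$, the defect is governed by $\int_{\{u<t\}}(f-1)=O(t^{(2-\zeta)/2})$ (or $O(\ln t)$ if $\zeta=2$), giving a first genuine rate $w=O(|x|^{2-\zeta_{0}})$ with $\zeta_{0}>0$. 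Then, as in Caffarelli--Li and Bao--Li--Zhang, a bound $|D^{2}w|=O(|x|^{-\beta})$ makes $R=O(|x|^{-2\beta})$, so comparing $w$ on dyadic annuli with solutions of the constant‑coefficient Poisson equation improves $\beta$ at each step (roughly doubling it) until $\beta$ reaches $\zeta$; Schauder estimates turn this into $w=O_{m+1}(|x|^{2-\zeta+\epsilon})$ for every $\epsilon>0$, and now $R=O(|x|^{-2\beta})$ decays strictly faster than $|x|^{-\zeta}$, so the linearization is essentially exact in the next step.

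\emph{Step 3 (sharp rate, the logarithms and the linear term).} Expand $w(r\theta)=\sum_{k\ge0}w_{k}(r)\phi_{k}(\theta)$ in Fourier modes on $S^{1}$; each $w_{k}$ solves $w_{k}''+r^{-1}w_{k}'-k^{2}r^{-2}w_{k}=g_{k}$ with $g_{k}=O(r^{-\zeta})$ (leading part of $(f-1)+R$). For $k\ge2$ the homogeneous solutions $r^{\pm k}$ are non‑resonant and $w_{k}=O(r^{2-\zeta})$. For the two degree‑one modes the homogeneous solutions are $r,r^{-1}$; variation of parameters gives $w_{1}=b_{1}r+O(r^{2-\zeta})$ when $1<\zeta<2$, the coefficient $b_{1}$ being a convergent integral, finite precisely because $\zeta>1$ — this is $b\cdot x$ — while at the resonance $\zeta=1$ one gets $O(r\ln r)$ instead. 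For $k=0$ the homogeneous solutions $1,\ln r$ resonate with $r^{-\zeta}$ only at $\zeta=2$, where $(rw_{0}')'=rg_{0}=O(r^{-1})$ integrates to $w_{0}=O((\ln r)^{2})$ (matching $\Delta\big((\ln r)^{2}\big)=2r^{-2}$ in the plane, which is exactly why $n=2$ yields $(\ln|x|)^{2}$ where $n\ge3$ yields only $\ln|x|$), and $O(r^{2-\zeta})$ for $\zeta<2$; constants and any $\ln r$ terms are absorbed since $\ln|x|=o(|x|^{2-\zeta})$ for $\zeta<2$ and $\ln|x|=o((\ln|x|)^{2})$. Summing the modes — the series convergence and the derivative bounds here consume the hypothesis $m\ge3$, one derivative more than for $n\ge3$ — yields \eqref{equ-beha-n=2}; for $0<\zeta\le1$ one has $|x|=O(|x|^{2-\zeta})$, so the degree‑one part is swallowed by the error and no $b$ appears.

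\emph{Main obstacle.} The delicate stage is Step 2: converting the soft $o(|x|^{2})$ bound into a quantitative algebraic rate while the linearization of $\det(I+D^{2}w)$ about $I$ is only asymptotically $\Delta$ and the remainder is quadratic in $D^{2}w$. One must interleave the geometric level‑set estimate that starts the iteration, the Poisson comparison on exterior annuli, and the gain of regularity, and then — in Step 3 — verify at each resonance $\zeta\in\{1,2\}$ that precisely the stated logarithmic power, and nothing worse, is produced.
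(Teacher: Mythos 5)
Your overall architecture --- a level-set estimate for the quadratic term, an iteration improving the decay of the error $w=u-\tfrac12x^TAx$, and a spherical-harmonic/ODE analysis of the linearized (Poisson) equation to produce $b\cdot x$ and the logarithmic corrections at $\zeta\in\{1,2\}$ --- is essentially the paper's (Theorem \ref{thm:quadratic-ori}, Lemma \ref{lem:initial-n=2}, Lemmas \ref{Lem-existence-fastdecay} and \ref{lem-existence-fastd-deri}). Your Step 1 blow-down is redundant, since the level-set theorem already yields the quantitative rate \eqref{equ-roughConverge}; and your explicit mode-by-mode ODE analysis replaces the paper's prepared existence lemma, which in addition supplies the uniform-in-$k$ summability of the modes (via $L^2(\mathbb S^{1})$ bounds on circles and weak Harnack) that you do not address.

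The genuine flaw is the endpoint you claim for Step 2: ``$w=O_{m+1}(|x|^{2-\zeta+\epsilon})$ for every $\epsilon>0$'' is false for $1<\zeta\le 2$ whenever $b\neq 0$, because already the first-derivative part of the $O_{m+1}$ notation would force $Dw\to 0$, i.e.\ $b=0$, contradicting your own Step 3 where $b_1$ is extracted from the degree-one mode. This is precisely why the iterative structure in Lemma \ref{lem:initial-n=2} (Bao--Li--Zhang, Lemma 2.2) carries the cap $\epsilon'<\min\{\tfrac{\zeta}{2},\tfrac12\}$: the doubling cannot push the exponent for $w$ itself past $1$ before the linear part has been removed. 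The statement can be repaired: either run the dyadic Poisson comparison only at the level of $D^2w$ (the harmonic remainder, being $o(|x|^2)$ in the exterior domain, has Hessian $O(|x|^{-2})$, so $|D^2w|=O(|x|^{-\min\{\zeta,2\beta\}})$ up to logarithms, which is all that Step 3 actually needs since the source there only requires $|D^2w|^2$ to decay faster than $|x|^{-\zeta}$), or follow the paper: stop the iteration below exponent $1$, do one round of the Poisson/harmonic comparison to identify $b$, subtract $b\cdot x$, and do a second round to reach the sharp rate. Two further inaccuracies worth fixing: the hypothesis $m\ge3$ is consumed in the iteration lemma (Hölder control of the quadratically nonlinear remainder $\det D^2w$ on rescaled annuli, which needs decaying bounds on third-order derivatives), not in ``summing the modes'' in Step 3; and your Schauder upgrades to $O_{m+1}$ at each stage tacitly require exactly those higher-derivative decay estimates, which is the content of \eqref{Chap6-Asy-Initial} rather than a routine afterthought.
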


\begin{remark}
  For $\zeta\not=1$, the optimality of \eqref{equ-beha-n>3} and \eqref{equ-beha-n=2} can be verified by radially symmetric solutions, where  $f(x)$ and $u(x)$ are as in \eqref{equ:example-0} and  \eqref{equ:example} below.
  For $\zeta=1$, whether \eqref{equ-beha-n>3} and \eqref{equ-beha-n=2} are optimal remains a problem for now.
\end{remark}

The paper is organized as follows. In section \ref{seclabel-Radial}
we give the asymptotic expansion of radially symmetric solutions where $f=1+|x|^{-\zeta}$ with $0<\zeta\leq 2$ at infinity.
In section \ref{seclabel-QuadraticTerm} we capture the quadratic term of $u$ given in Theorems \ref{thm:main-n>3} and \ref{thm:main-n=2} at infinity i.e., there exist $0<A\in\mathtt{Sym}(n)$ satisfying $\det A=f(\infty)$ and $C,\epsilon>0$ such that
\begin{equation}\label{equ-roughConverge}
\left|u(x)-\frac{1}{2}x^TAx\right|\leq C|x|^{2-\epsilon}.
\end{equation}
In section \ref{seclabel-Poissonequ} we prepare some necessary results on existence of solution to Poisson equations on exterior domain.  In sections
\ref{seclabel-n>3} and \ref{seclabel-n=2} we prove Theorems \ref{thm:main-n>3} and \ref{thm:main-n=2} respectively.

\section{Radially symmetric examples}\label{seclabel-Radial}

Consider strictly positive radially symmetric function $f\in C^{\infty}(\mathbb R^n)$ with

  \begin{equation}\label{equ:example-0}
  f(x)=\left\{
  \begin{array}{llll}
    1, & 0\leq |x|\leq 1,\\
    1+|x|^{-\zeta}, & |x|>2,\\
  \end{array}
  \right.
  \end{equation}
  where $0<\zeta\leq 2$.
  By a direct computation, we have the following radially symmetric solution of \eqref{equ:MA}
\begin{equation}\label{equ:example}
u(x)=n^{\frac{1}{n}} \int_{0}^{r}\left(\int_{0}^{s} t^{n-1} f(t)  \mathrm{d} t\right)^{\frac{1}{n}}  \mathrm{d} s,
\end{equation}
where $r:=|x|\geq 0$. We shall obtain asymptotic expansion at infinity.

\begin{theorem}\label{thm:radial}
  Let $f(x),u(x)$ be as in \eqref{equ:example-0} and \eqref{equ:example}. Then for sufficiently large $|x|$ we have the following asymptotic expansion at infinity.
When $0<\zeta<n$,
\begin{equation}\label{equ:radial}
\begin{array}{llll}
u(x)=&\displaystyle \frac{r^2}{2}+C_1\ln r+C_2\\
&\displaystyle+\sum_{j=1}^{\infty}\sum_{k=0,\cdots,j\atop
\zeta k+n(j-k)\neq 2}
 \dfrac{(\frac{1}{n})\cdots (\frac{1}{n}-j+1)}{k!(j-k)!(2-\zeta k-n(j-k))}\frac{n^jC_0^{j-k}}{(n-\zeta)^k}r^{2-\zeta k-n(j-k)}.\\
\end{array}
\end{equation}
When $\zeta=n$,
\begin{equation}\label{equ:radial-22}
\begin{array}{llll}
u(x)=&\displaystyle \frac{r^2}{2}+\frac{1}{2}(\ln r)^2+C_3\ln r+C_4\\
&\displaystyle -\sum_{j=2}^{\infty}\sum_{k=0}^j\sum_{l=0}^{k}
\dfrac{(\frac{1}{2})\cdots (\frac{1}{2}-j+1)\cdot 2^{j-l-1}C_3^{j-k}}{(j-k)!(k-l)!(j-1)^{l+1}}
 r^{2-2j}(\ln r)^{k-l}.
 \end{array}
\end{equation}
Here the  constants $C_0,C_1,C_2,C_3, C_4$ are given below in the proof.

\end{theorem}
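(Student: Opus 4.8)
The proof is a direct evaluation of the iterated integral \eqref{equ:example}: the plan is to compute the inner integral exactly, expand the integrand into a convergent series valid for large argument, and integrate term by term.

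\textbf{Step 1 (the inner integral).} Set $I(s):=\int_0^s t^{n-1}f(t)\,\mathrm{d}t$. For $s>2$, splitting at $t=2$ and using $f(t)=1+t^{-\zeta}$ on $(2,\infty)$ gives, when $\zeta\neq n$,
\[
I(s)=\frac{s^n}{n}+\frac{s^{n-\zeta}}{n-\zeta}+C_0,\qquad C_0:=\int_0^2 t^{n-1}f(t)\,\mathrm{d}t-\frac{2^n}{n}-\frac{2^{n-\zeta}}{n-\zeta},
\]
and, when $\zeta=n$ (which, under $0<\zeta\le 2$ and $n\ge 2$, forces $n=2$),
\[
I(s)=\frac{s^n}{n}+\ln s+C_3,\qquad C_3:=\int_0^2 t^{n-1}f(t)\,\mathrm{d}t-\frac{2^n}{n}-\ln 2 .
\]
Write $I(s)=\frac{s^n}{n}\bigl(1+w(s)\bigr)$, so $w(s)=\frac{n}{n-\zeta}s^{-\zeta}+nC_0 s^{-n}$ in the first case and $w(s)=\frac{n}{s^n}(\ln s+C_3)$ in the second; in both cases $w(s)\to 0$, so there is $R_0>2$ with $|w(s)|<1$ for $s\ge R_0$, and then $I(s)^{1/n}=n^{-1/n}s\,(1+w(s))^{1/n}=n^{-1/n}s\sum_{j\ge 0}\binom{1/n}{j}w(s)^j$, the binomial series converging absolutely and locally uniformly on $[R_0,\infty)$.

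\textbf{Step 2 (integrate term by term).} Since $n^{1/n}n^{-1/n}=1$, one has $u(x)=\int_0^r s\,(1+w(s))^{1/n}\,\mathrm{d}s$; split $\int_0^r=\int_0^{R_0}+\int_{R_0}^r$ (the first piece is an absolute constant) and integrate the uniformly convergent series on $[R_0,r]$ term by term. When $\zeta<n$, expand $w(s)^j$ by the binomial theorem into monomials $s^{-\zeta k-n(j-k)}$, multiply by $s$, and integrate: $s^{1-\zeta k-n(j-k)}$ produces a multiple of $r^{2-\zeta k-n(j-k)}$ when $\zeta k+n(j-k)\ne 2$ and a multiple of $\ln r$ when $\zeta k+n(j-k)=2$. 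Collecting coefficients via $\binom{1/n}{j}\binom{j}{k}=\frac{(1/n)\cdots(1/n-j+1)}{k!(j-k)!}$ and $\bigl(\tfrac{n}{n-\zeta}\bigr)^k(nC_0)^{j-k}=\frac{n^jC_0^{j-k}}{(n-\zeta)^k}$ gives exactly \eqref{equ:radial}, with $C_1$ the finite sum of the coefficients of the resonant terms $\zeta k+n(j-k)=2$ and $C_2$ absorbing $\int_0^{R_0}s(1+w)^{1/n}\,\mathrm{d}s$ together with all $R_0$-endpoint contributions. When $\zeta=n=2$, write $w(s)^j=2^j s^{-2j}(\ln s+C_3)^j$: the $j=0$ term gives $\tfrac{r^2}{2}$, the $j=1$ term gives $\tfrac12(\ln r)^2+C_3\ln r$ (from $\int s^{-1}(\ln s+C_3)\,\mathrm{d}s$), and for $j\ge 2$ expand $(\ln s+C_3)^j=\sum_k\binom{j}{k}(\ln s)^kC_3^{j-k}$ and use the iterated integration-by-parts identity $\int s^a(\ln s)^k\,\mathrm{d}s=s^{a+1}\sum_{l=0}^k\frac{(-1)^l k!}{(a+1)^{l+1}(k-l)!}(\ln s)^{k-l}+\mathrm{const}$ with $a+1=2-2j$; simplifying the powers of $2$ and $j-1$ produces the triple sum in \eqref{equ:radial-22}, with $C_4$ collecting the remaining constants.

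\textbf{Main obstacle.} The only delicate point is to confirm that, after term-by-term integration, the resulting series \emph{in $r$} still converges for $r$ large, so that \eqref{equ:radial} and \eqref{equ:radial-22} are genuine identities and not merely formal. I would argue that generation $j$ contributes only powers $r^{2-\beta}$ with $\beta\ge\zeta j$ (respectively $\beta=2j-2$), while the total size of the generation-$j$ coefficients grows at most geometrically in $j$: one has $|\binom{1/n}{j}|\le\frac1{nj}$, the inner $k$-sum telescopes by the binomial theorem, and the resonance denominators $2-\zeta k-n(j-k)$ (respectively $j-1$) are bounded away from $0$ for all but finitely many $(j,k)$, which are handled separately. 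Hence for $r$ large the tail is dominated by a convergent geometric-type series $r^2\sum_j(Mr^{-\zeta})^j$, which also determines the range of validity ("sufficiently large $|x|$"). I expect this convergence bookkeeping, rather than any conceptual difficulty, to be the crux.
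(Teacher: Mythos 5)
Your proposal is correct and follows essentially the same route as the paper: compute the inner integral explicitly for $s>2$, expand $(1+w(s))^{1/n}$ by the binomial series once $|w(s)|<1$ on $[R,\infty)$, integrate term by term (with the resonant exponents $\zeta k+n(j-k)=2$ giving the $\ln r$ term and iterated integration by parts handling $\int s^{1-2j}(\ln s)^k\,\mathrm{d}s$ when $\zeta=n=2$), and collect the constants $C_0,\dots,C_4$. The convergence bookkeeping you flag as the crux is exactly the kind of geometric-in-$j$ estimate the paper uses (in its corollary) and poses no obstruction, so your argument is complete in the same sense as the paper's.
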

\begin{proof}
  When $\zeta<n$,   for all $r>R>2$, we have from \eqref{equ:example} that
\[\begin{array}{llll}
    u(x)&=&\displaystyle  C_R+n^{\frac{1}{n}}\int_R^r\left(\int_{0}^{s} t^{n-1} f(t) \mathrm{d} t\right)^{\frac{1}{n}} \mathrm{~d} s\\
  &=&\displaystyle C_R+n^{\frac{1}{n}}\int_R^r\left(
  \frac{s^n}{n}+\frac{s^{n-\zeta}}{n-\zeta}+C_0
  \right)^{\frac{1}{n}} \mathrm{~d} s\\
  &=&\displaystyle C_R+\int_R^rs\left(1+\left(\frac{n}{n-\zeta}s^{-\zeta}+nC_0s^{-n}\right)\right)^{\frac{1}{n}}\mathrm ds,\\
\end{array}
\]
 where
\[
  C_R:=n^{\frac{1}{n}}\int_0^R\left(\int_{0}^{s} t^{n-1} f(t) \mathrm{d} t\right)^{\frac{1}{n}} \mathrm{~d} s\quad\text{and}\quad
  C_0=\int_0^2t^{n-1}f(t)\mathrm dt-\frac{2^n}{n}-\frac{2^{n-\zeta}}{n-\zeta}.
\]
Choose $R=R(n,\zeta,C_0)>2$  such that $\frac{n}{n-\zeta}R^{-\zeta}+n|C_0|R^{-n}<1$ and for all $r>R$ we have
\[\begin{array}{lllll}
 u(x) &  =&\displaystyle
  C_R+\int_R^rs\left(
  1+\sum_{j=1}^{\infty}\dfrac{(\frac{1}{n})\cdots (\frac{1}{n}-j+1)}{j!}\left(\frac{n}{n-\zeta}s^{-\zeta}+nC_0s^{-n}\right)^j
  \right)\mathrm ds\\
  &=&\displaystyle
  C_R+\int_R^rs\left(
  1+\sum_{j=1}^{\infty}\sum_{k=0}^j\dfrac{(\frac{1}{n})\cdots (\frac{1}{n}-j+1)}{k!(j-k)!}
  \left(\frac{n}{n-\zeta}s^{-\zeta}\right)^{k}(nC_0s^{-n})^{j-k}
  \right)\mathrm ds\\
  &=&\displaystyle
  \frac{r^2}{2}+C_R-\frac{R^2}{2}+
  \sum_{j=1}^{\infty}\sum_{k=0}^j
  \dfrac{(\frac{1}{n})\cdots (\frac{1}{n}-j+1)}{k!(j-k)!}\frac{n^jC_0^{j-k}}{(n-\zeta)^k}
  \int_R^rs^{1-\zeta k-n(j-k)}\mathrm ds.
\end{array}
\]

By a direct computation,  we obtain the desired result \eqref{equ:radial} with
\[
C_1
=\sum_{j=1}^{\infty}\sum_{k=0,\cdots,j\atop \zeta k+n(j-k)=2}\frac{\left(\frac{1}{n}\right) \cdots\left(\frac{1}{n}-j+1\right)}{k !(j-k) !} \frac{n^{j} C_{0}^{j-k}}{(n-\zeta)^{k}}
\]
and
\[
C_2=
 C_R-\frac{R^2}{2}-C_1\ln R
 -\sum_{j=1}^{\infty}\sum_{k=0,\cdots,j\atop \zeta k+n(j-k)\neq 2}
 \dfrac{(\frac{1}{n})\cdots (\frac{1}{n}-j+1)}{k!(j-k)!(2-\zeta k-n(j-k))}\frac{n^jC_0^{j-k}}{(n-\zeta)^k}R^{2-\zeta k-n(j-k)}.
\]

  When $\zeta=n$, since $0<\zeta\leq 2$ and $n\geq 2$,  the only possibility is $\zeta=n=2$. Thus for all $r>R>2$, we have from \eqref{equ:example} that
  \[
  \begin{array}{lllll}
    u(x)&=&\displaystyle  C_R+2^{\frac{1}{2}}\int_R^r\left(\int_{0}^{s} t f(t) \mathrm{d} t\right)^{\frac{1}{2}} \mathrm{~d} s\\
  &=&\displaystyle C_R+2^{\frac{1}{2}}\int_R^r\left(
  \frac{s^2}{2}+\ln s+C_3
  \right)^{\frac{1}{2}} \mathrm{~d} s\\
  &=&\displaystyle C_R+\int_R^rs\left(1+\left(2s^{-2}\ln s+2C_3s^{-2}\right)\right)^{\frac{1}{2}}\mathrm ds,\\
\end{array}
\]
where $C_R$ is as above and
\[
  C_3:=\int_0^2tf(t)\mathrm dt-2-\ln 2.
\]
Choose  $R=R(n,\zeta,C_3)>2$  such that $2 R^{-2} \ln R+2 |C_{3}| R^{-2}<1$ and
for all $r>R$ we have
\[\begin{array}{lllll}
  u(x)&=&\displaystyle
  C_R+\int_R^rs\left(
  1+\sum_{j=1}^{\infty}\dfrac{(\frac{1}{2})\cdots (\frac{1}{2}-j+1)}{j!}\left(2 s^{-2} \ln s+2 C_{3} s^{-2}\right)^j
  \right)\mathrm ds\\
  &=&\displaystyle
  C_R+\int_R^rs\left(
  1+\sum_{j=1}^{\infty}\sum_{k=0}^j\dfrac{(\frac{1}{2})\cdots (\frac{1}{2}-j+1)}{k!(j-k)!}
  \left(2 s^{-2} \ln s\right)^{k}(2C_3s^{-2})^{j-k}
  \right)\mathrm ds\\
  &=&\displaystyle
  \frac{r^2}{2}+C_R-\frac{R^2}{2}+
  \sum_{j=1}^{\infty}\sum_{k=0}^j
  \dfrac{(\frac{1}{2})\cdots (\frac{1}{2}-j+1)}{k!(j-k)!}\cdot 2^j C_3^{j-k}
  \int_R^rs^{1-2j}(\ln s)^k\mathrm ds.
  \end{array}
  \]

By a direct computation,
\[
\int_{R}^rs^{-1}\ln s\mathrm ds=\frac{1}{2}\left((\ln r)^2-(\ln R)^2\right)
\]
and for $j=2,3,\cdots$,
\[
\begin{array}{lllll}
&\displaystyle
\int_R^rs^{1-2j}(\ln s)^k\mathrm ds\\
=&\displaystyle \dfrac{1}{2-2j}\left(
r^{2-2j}(\ln r)^k-R^{2-2j}(\ln R)^k-k\int_R^rs^{1-2j}(\ln s)^{k-1}\mathrm ds
\right)\\
=&\displaystyle \dfrac{(r^{2-2j}(\ln r)^k-R^{2-2j}(\ln R)^k)}{2-2j}\\
&\displaystyle -\dfrac{k}{(2-2j)^2}\left(r^{2-2j}(\ln r)^{k-1}-R^{2-2j}(\ln R)^{k-1}
-(k-1)\int_R^rs^{1-2j}(\ln s)^{k-2}\mathrm ds
\right)\\
=&\displaystyle \dfrac{(r^{2-2j}(\ln r)^k-R^{2-2j}(\ln R)^k)}{2-2j}-
\dfrac{k(r^{2-2j}(\ln r)^{k-1}-R^{2-2j}(\ln R)^{k-1})}{(2-2j)^2}\\
&\displaystyle +\dfrac{k(k-1)}{(2-2j)^3}
\left(r^{2-2j}(\ln r)^{k-2}-R^{2-2j}(\ln R)^{k-2}
-(k-2)\int_R^rs^{1-2j}(\ln s)^{k-3}\mathrm ds
\right)\\
=&\cdots\\
=&\displaystyle -\sum_{l=0}^{k}\dfrac{k!}{(2j-2)^{l+1}(k-l)!}\left(
r^{2-2j}(\ln r)^{k-l}-R^{2-2j}(\ln R)^{k-l}
\right).\\
\end{array}
\]
Consequently, we obtain the desired result \eqref{equ:radial-22} with
\[
\begin{array}{lll}
C_4&=& C_R-\frac{R^2}{2}-\frac{1}{2}(\ln R)^2-C_3\ln R\\
&&\displaystyle+\sum_{j=2}^{\infty}\sum_{k=0}^j\sum_{l=0}^{k}
\dfrac{(\frac{1}{2})\cdots (\frac{1}{2}-j+1)\cdot 2^{j-l-1}C_3^{j-k}}{(j-k)!(k-l)!(j-1)^{l+1}} \cdot
 R^{2-2j}(\ln R)^{k-l}.\\
\end{array}
\]
\end{proof}

By the asymptotic expansion results in Theorem \ref{thm:radial}, we have the following corollary which proves Theorems \ref{thm:main-n>3} and \ref{thm:main-n=2} for radially symmetric cases, and show the optimality of \eqref{equ-beha-n>3} and \eqref{equ-beha-n=2} for $\zeta\neq 1$.

\begin{corollary}

Let $f(x), u(x)$ be as in \eqref{equ:example-0} and \eqref{equ:example}.
When $n\geq 3$, we have
\begin{equation}\label{equ:example-result-1}
u(x)=\dfrac{1}{2}|x|^2+\left\{
\begin{array}{llll}
  O(|x|^{2-\zeta}), & \text{if }\zeta<2,\\
  O(\ln|x|), & \text{if }\zeta=2,\\
\end{array}
\right.
\end{equation}
as $|x|\rightarrow\infty$. When $n=2$, we have
\begin{equation}\label{equ:example-result-2}
u(x)=\dfrac{1}{2}|x|^2+\left\{
\begin{array}{llll}
  O(|x|^{2-\zeta}), & \text{if }\zeta<2,\\
  O((\ln|x|)^2), & \text{if }\zeta=2,\\
\end{array}
\right.
\end{equation}
as $|x|\rightarrow\infty$. The estimates above are also optimal.
\end{corollary}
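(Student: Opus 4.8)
The plan is to deduce everything directly from the explicit expansions of Theorem~\ref{thm:radial}: those already present $u$ as $\frac12 r^2$ plus an (at most countable) sum of terms $r^{\alpha}(\ln r)^{\beta}$ whose convergence for large $|x|$ has been established there, so all that remains is to isolate the dominant remainder term in each $\zeta$-regime and to check that its coefficient does not vanish, the latter being exactly what gives optimality. No new analytic ingredient is needed; the work is bookkeeping about which exponent is largest.

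First I would treat $0<\zeta<2$, which for every $n\ge2$ forces $\zeta<n$, so \eqref{equ:radial} applies. I would argue that among the exponents $2-\zeta k-n(j-k)$ occurring in \eqref{equ:radial} (with $j\ge1$, $0\le k\le j$, the value $2$ being excluded into the $C_1\ln r$ term) the largest is $2-\zeta$, attained only at $j=k=1$: if $k=j$ the exponent is $2-\zeta j\le 2-\zeta$, while if $k<j$ then $\zeta k+n(j-k)\ge n>2$ for $n\ge3$, and $\ge 2(j-k)\ge2$ for $n=2$, so the exponent is $\le0$. The coefficient of $r^{2-\zeta}$ works out to $\frac{1/n}{1!\,0!\,(2-\zeta)}\cdot\frac{n\,C_0^{0}}{(n-\zeta)^{1}}=\frac{1}{(2-\zeta)(n-\zeta)}\neq0$, and since $2-\zeta>0$ the surviving pieces $C_1\ln r$, $C_2$, and every lower power are $o(r^{2-\zeta})$; their sum (legitimate by Theorem~\ref{thm:radial}) is still $o(r^{2-\zeta})$. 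This yields $u(x)-\frac12|x|^2=\frac{1}{(2-\zeta)(n-\zeta)}|x|^{2-\zeta}+o(|x|^{2-\zeta})$, which is exactly the $\zeta<2$ lines of \eqref{equ:example-result-1}--\eqref{equ:example-result-2}, sharply.

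Next I would handle $\zeta=2$, splitting on $n$. For $n\ge3$ one still has $\zeta<n$ and \eqref{equ:radial}, but the $j=k=1$ contribution now has exponent $0$ and is absorbed into the logarithm with coefficient $C_1=\frac{1/n}{1!\,0!}\cdot\frac{n}{(n-2)^{1}}=\frac{1}{n-2}\neq0$, while every genuine power has exponent $\le\max\{2-n,-2\}<0$; hence $u(x)-\frac12|x|^2=\frac{1}{n-2}\ln|x|+O(1)=O(\ln|x|)$, sharp. For $n=2$ one is in the case $\zeta=n=2$, so \eqref{equ:radial-22} applies: the only non-decaying part is $\frac12(\ln r)^2+C_3\ln r+C_4$ (the triple series has $j\ge2$, so $r^{2-2j}(\ln r)^{k-l}\to0$), and the leading term $\frac12(\ln r)^2$ has nonzero coefficient, giving $O((\ln|x|)^2)$, sharp. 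Finally, since these radial $f,u$ satisfy \eqref{equ:cond-f} with the prescribed $\zeta$ and have vanishing linear part (the correction $r^{2-\zeta}$ is sublinear when $\zeta>1$), they also realize the rates of Theorems~\ref{thm:main-n>3} and \ref{thm:main-n=2} for $\zeta\neq1$. I expect the only real pitfall to be the exponent comparison — confirming that nothing with exponent in $(2-\zeta,2)$ enters the multi-index sums and that the displayed leading coefficients are genuinely nonzero — but both reduce to the elementary inequalities $\zeta\le2$ and $n\ge2$ recorded above.
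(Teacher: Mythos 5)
Your proposal is correct and follows essentially the same route as the paper: identify that in \eqref{equ:radial} the largest admissible exponent is $2-\zeta$, attained only at $j=k=1$ with coefficient $\frac{1}{(2-\zeta)(n-\zeta)}\neq 0$, handle $\zeta=2<n$ via $C_1=\frac{1}{n-2}$ and $\zeta=n=2$ via the $\frac12(\ln r)^2$ term in \eqref{equ:radial-22}, and read off optimality from the nonvanishing leading coefficients. The only point to state a bit more explicitly (as the paper does, by bounding the remaining series by $Cr^{2-2\zeta}+Cr^{2-n}$, resp.\ $Cr^{-2}(\ln r)^2$, for large $r$) is that the tail of the expansion admits a uniform quantitative bound, not merely termwise smallness, but this follows from the same binomial-series estimates underlying Theorem \ref{thm:radial}.
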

\begin{proof}
  When $\zeta<n$, we have that
\[
\zeta k+n(j-k)=2\quad\text{if and only if }
\left\{
\begin{array}{lllll}
  j=k=\frac{2}{\zeta}, & \text{when }n\geq 3,\\
  j=k=\frac{2}{\zeta}\text{ or }j=1,~k=0, & \text{when }n=2,\\
\end{array}
\right.
\]
and 
\[
2-\zeta k-n(j-k)\leq 2-\zeta,
\]
for all $j=1,2,\cdots,$ $k=0,\cdots,j$ with the equality holds if and only if $j=k=1$.
Consequently when $\zeta<2$, we have from asymptotic expansion \eqref{equ:radial} that there exist $C,R>0$ such that
\[
\left|
u(x)-\left(\frac{r^2}{2}+\frac{r^{2-\zeta}}{(2-\zeta)(n-\zeta)}\right)
\right|\leq |C_1|\ln r+|C_2|+Cr^{2-2\zeta}+Cr^{2-n},
\]
for all $r>R$. The desired estimates in \eqref{equ:example-result-1} and 
\eqref{equ:example-result-2} with $\zeta<2$ follow immediately and they are optimal in the sense that the order $r^{2-\zeta}$ cannot be smaller. 

When $\zeta=2<n$,  we have  from asymptotic expansion \eqref{equ:radial} that  $C_1=\frac{1}{n-\zeta}>0$ and there exist $C,R>0$ such that
\[
\left|
u(x)-\left(\frac{r^2}{2}+\frac{\ln r}{n-\zeta}-\frac{C_0r^{2-n}}{n-2}\right)
\right|\leq |C_2|+Cr^{-2},
\]
for all $r>R$. The desired estimate in \eqref{equ:example-result-1} with $\zeta=2$ follows immediately and it is optimal in the sense that the order $\ln r$ cannot be smaller. 

When $\zeta=n$, we have from asymptotic expansion \eqref{equ:radial-22} that there exist $C,R>0$ such that
\[
\left|
u(x)-\left(\frac{r^2}{2}+\frac{(\ln r)^2}{2}\right)
\right|\leq |C_3|\ln r+|C_4|+Cr^{-2}(\ln r)^2,
\]
for all $r>R$. The desired estimate in \eqref{equ:example-result-2} with $\zeta=2$ follows immediately and it is optimal in the sense that the order $(\ln r)^2$ cannot be smaller.
\end{proof}

\section{Quadratic term at infinity}\label{seclabel-QuadraticTerm}

In this section, we capture the quadratic term at infinity. Hereinafter, we let $B_r(x)$ denote the ball centered at $x$ with radius $r$ and $B_r:=B_r(0)$. Furthermore, by the interior regularity by Caffarelli \cite{Caffarelli-InteriorEstimates-MA} and Jian--Wang \cite{Jian-Wang-ContinuityEstimate-MA} and extension theorem of convex functions by Min \cite{Min-Extension-ConvexFunc}, we may assume without loss of generality that $f$ is strictly positive and $u$ is a classical solution.
\begin{theorem}\label{thm:quadratic-ori}
  Let $u \in C^{0}\left(\mathbb{R}^{n}\right)$ be a convex viscosity solution of \eqref{equ:MA} with $n\geq 2$ and $u(0)=\min_{\mathbb R^n}u=0$.
  Let $0<f\in C^0(\mathbb R^n)$ satisfy
  \begin{equation}\label{equ:cond-f-integ}
\left(
\int_{B_{R}}\left|f(z)^{\frac{1}{n}}-1\right|^{n}  \mathrm{d} z
\right)^{\frac{1}{n}}\leq CR^{1-\zeta}
  \end{equation}
  for some $C>0$ and $\zeta>0$.
  Then there exists a linear transform $T$ satisfying $\det T=1$ such that $v(x):=u(Tx)$ satisfies
  \begin{equation}\label{equ-roughConverge-ori}
\left|v(x)-\frac{1}{2}|x|^2\right|\leq C|x|^{2-\epsilon}\quad\forall~|x|\geq 1
\end{equation}
for some $C>0$ and $0<\epsilon<\min\{\frac{1}{10},\frac{\zeta}{3}\}$.
\end{theorem}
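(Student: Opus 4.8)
The plan is to combine the interior $C^2$ estimate of Caffarelli with the Jörgens--Calabi--Pogorelov type asymptotics, realized quantitatively through a rescaling argument. First I would normalize: since $u$ is convex with $u(0)=\min u=0$, for each large $R$ consider the sublevel set $\Omega_R:=\{u<R^2\}$ (or rather the section of $u$ at height comparable to $R^2$). By the Monge--Amp\`ere equation and \eqref{equ:cond-f-integ}, the measure $\det D^2 u\,dx$ is close to Lebesgue measure on large balls in an averaged sense, so John's lemma gives an ellipsoid $E_R$ with $B_{cR}\subset T_R^{-1}\Omega_R\subset B_{CR}$ after an affine map $T_R$ of determinant $1$; the slow-convergence hypothesis \eqref{equ:cond-f-integ} with exponent $1-\zeta$ is exactly what controls how fast these normalizing maps $T_R$ can rotate/shear as $R\to\infty$, and one shows they form a Cauchy family converging to a fixed $T$ with $\det T=1$.

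The core step is the iteration/dyadic argument. Set $v(x)=u(Tx)$ and define the oscillation quantity $\theta(R):=\sup_{|x|=R}|v(x)-\tfrac12|x|^2|$ (suitably normalized, e.g.\ divided by $R^2$). On the annulus $R\le|x|\le 2R$, rescale $w(y):=R^{-2}v(Ry)$, which solves $\det D^2 w = f(Ry)$ on $B_2\setminus B_{1}$ with $\|f(R\cdot)-1\|$ small by \eqref{equ:cond-f-integ} — quantitatively of size $O(R^{-\zeta})$ in the relevant integral norm. Caffarelli's interior $C^{2,\alpha}$ estimate plus the rigidity that the only entire normalized solution with $f\equiv 1$ is $\tfrac12|y|^2$ gives, via compactness or direct barrier comparison, that $w$ is $O(R^{-\epsilon_0})$-close in $C^2$ to a quadratic $\tfrac12 y^T A_R y$ with $\det A_R=1$; translating back, $A_R\to I$ and the quadratic part of $v$ stabilizes. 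Feeding this back yields a decay estimate of the form $\theta(2R)\le \tfrac12\theta(R)+C R^{-\epsilon}$ for the normalized oscillation, whose solution is $\theta(R)=O(R^{-\epsilon})$ after choosing $\epsilon$ smaller than both $\zeta/3$ and the universal exponent $1/10$ coming from Caffarelli's estimate; summing the telescoped errors produces \eqref{equ-roughConverge-ori}. One must also absorb the linear term: a priori $v-\tfrac12|x|^2$ could carry a linear piece, but since $v$ attains its minimum near the origin and is trapped between quadratics, the linear term is bounded and gets incorporated into the $O(|x|^{2-\epsilon})$ (indeed at this stage we only claim the quadratic, not the linear, is pinned down — the linear term is handled later in the paper).

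The main obstacle I expect is controlling the drift of the normalizing affine maps $T_R$ across dyadic scales using only the integral smallness \eqref{equ:cond-f-integ} rather than pointwise decay: one needs that consecutive maps $T_R$ and $T_{2R}$ differ by $I+O(R^{-\epsilon})$, which requires a careful stability/uniqueness statement for John ellipsoids of Monge--Amp\`ere sections under $L^n$-perturbations of the right-hand side — this is where the precise exponent bookkeeping (why $\epsilon<\zeta/3$ rather than $\epsilon<\zeta$) enters, since each passage through the equation, the section geometry, and the $C^2$ estimate costs a fraction of $\zeta$. A secondary technical point is that Caffarelli's interior estimates need the right-hand side close to a constant in a scale-invariant way on the sections, not on Euclidean balls, so the section geometry estimates from the normalization step must be in place before the regularity theory is applied; once both are arranged, the iteration is routine.
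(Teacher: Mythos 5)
Your sketch is essentially the paper's route: the paper does not reprove this theorem but observes it follows from Theorem 1.2 of Bao--Li--Zhang (see also Proposition 3.3 of Caffarelli--Li) with the exponent changed to $\min\{\frac{\zeta}{3},\frac{1}{10}\}$, and that cited argument is exactly your scheme --- John-normalized level sets $\{u<M\}$, comparison with the constant right-hand-side Dirichlet solution on the normalized section at rate $O(R^{-\zeta})$ coming from \eqref{equ:cond-f-integ} (the Alexandrov estimate, i.e.\ your ``direct barrier comparison''), Pogorelov interior bounds, and a Cauchy family of determinant-one normalizing maps across dyadic scales, with the $\epsilon<\frac{\zeta}{3}$ bookkeeping arising just as you describe. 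One caution: the quantitative per-scale closeness must come from that explicit Alexandrov comparison rather than from compactness/rigidity (which only yields $o(1)$, no rate), and the contraction factor $\frac12$ in your oscillation inequality is neither justified nor needed --- summing the $O(R^{-\epsilon})$ drifts of the normalizing maps already gives \eqref{equ-roughConverge-ori}.
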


\begin{remark}
  If $f\in C^0(\mathbb R^n)$ satisfies
  \begin{equation}\label{equ:cond-f-0order}
  |f(x)-1|\leq C|x|^{-\zeta'}\quad\forall~x\in\mathbb R^n
  \end{equation}
  for some $C>0$ and $\zeta'>0$. Then \eqref{equ:cond-f-integ} holds with
  $\zeta=\zeta'$ or any $0<\zeta<1$ when $0<\zeta'<1$ or $\zeta'\geq 1$ respectively. In fact, by a direct computation, for all $R>2$ there exists $C>0$ (which may vary from term to term) such that
$$
\int_{B_{R}}\left|f(x)^{\frac{1}{n}}-1\right|^{n} \mathrm{d} x
\leq C\int_{B_R}|f(x)-1|^n  \mathrm{d} x\leq \left\{
\begin{array}{llll}
  CR^{n(1-\zeta')}, & \text{if }0<\zeta'<1,\\
  C\ln R, & \text{if }\zeta'=1,\\
  C, & \text{if }\zeta'>1.\\
\end{array}
\right.
$$
\end{remark}

Theorem \ref{thm:quadratic-ori} has been proved in Theorem 1.2 by Bao--Li--Zhang \cite{Bao-Li-Zhang-ExteriorBerns-MA} when $\zeta=1$ and it follows similarly from the proof therein (see also Proposition 3.3 of Caffarelli--Li \cite{Caffarelli-Li-ExtensionJCP}) by changing the $\epsilon$ from $\frac{1}{10}$ into $\min\{\frac{\epsilon}{3},\frac{1}{10}\}$.
Theorem \ref{thm:quadratic-ori}  proves  estimate \eqref{equ-roughConverge} by a change of variable.
\begin{proof}[Proof of \eqref{equ-roughConverge}]
  Let $u$ be as in Theorems \ref{thm:main-n>3} and \ref{thm:main-n=2}. Change of variable by setting
  $$
  \bar u(x):=\dfrac{1}{(f(\infty))^{\frac{1}{n}}}\left(u(x)-Du(0)\cdot x-u(0)\right)\quad\text{in }\mathbb R^n.
  $$
  By a direct computation, $\bar u$ satisfies Eq.~\eqref{equ:MA} with $f$ replaced by $\bar f(x):=\frac{f(x)}{f(\infty)}$. By taking $k=0$ in \eqref{equ:cond-f}, $\bar f$ verifies condition \eqref{equ:cond-f-0order} with $\zeta>0$ given in Theorems \ref{thm:main-n>3} and \ref{thm:main-n=2}. By Theorem \ref{thm:quadratic-ori}, there exists a linear transform $T$ with $\det T=1$ such that $\widetilde u(x):=\bar u(Tx)$ satisfies \eqref{equ-roughConverge-ori}. Since $T$ is invertible, we have

  $$
  \begin{array}{lllll}
    \displaystyle\left|\bar u(x)-\frac{1}{2}x^T(T^TT)x\right| & = &
    \displaystyle \left|\bar u(Tx)-\frac{1}{2}(Tx)^T(Tx)\right|\\
    &=&
    \displaystyle \left|\bar u(y)-\frac{1}{2}|y|^2\right|\\
    &\leq & C|y|^{2-\epsilon}\leq C|x|^{2-\epsilon}\\
  \end{array}
  $$
  for some $C>0$, where  $y:=Tx$. Then \eqref{equ-roughConverge} follows immediately by the definition of $\bar u$ and taking $A:=(f(\infty))^{\frac{1}{n}}T^TT>0$.
\end{proof}

\section{Preliminary on Poisson equations}\label{seclabel-Poissonequ}

In this section, we introduce the existence results for Poisson equation on exterior domain i.e.,
\begin{equation}\label{equ:Laplace}
  \Delta v=g\quad\text{in }\mathbb R^n\setminus\overline{B_1}.
\end{equation}

\begin{lemma}\label{Lem-existence-fastdecay}
  Let $g\in C^{\infty}(\mathbb R^n)$ with $n\geq 2$ satisfy
  \begin{equation}\label{Equ-cond-g}
  ||g(r\cdot)||_{L^p(\mathbb S^{n-1})}\leq c_0 r^{-k_1}(\ln r)^{k_2}\quad\forall~r>1
  \end{equation}
  for some $c_0>0,~k_1>0,~k_2\ge 0$ and $p>\frac{n}{2}, p\geq 2$. Then there exists a smooth solution
  $v$ of \eqref{equ:Laplace}
  such that
  \begin{equation}\label{Equ-exist-1}
  |v(x)|\leq
Cc_0|x|^{2-k_1}(\ln|x|)^{k},
  \end{equation}
  for some constant $C$ relying only on $n,k_1,k_2,p$ and
  \begin{equation}\label{equ:def-expone-k}
 k=\left\{
  \begin{array}{lllll}
  {k_2}, & \text{if }k_1\not\in\mathbb N, & \text{and }n=2,\\
 {k_2+1}, & \text{if }k_1\in\mathbb N\setminus\{2\},& \text{and }n=2,\\
 {k_2+2}, & \text{if }k_1=2,& \text{and }n=2,\\
  {k_2}, & \text{if }k_1-n\not\in\mathbb N, k_1\not\in\{1,2\},& \text{and }n\geq 3,\\
  {k_2+1}, & \text{if }k_1-n\in\mathbb N\text{ or }k_1\in\{1,2\},& \text{and }n\geq 3.\\
  \end{array}
  \right.
  \end{equation}
\end{lemma}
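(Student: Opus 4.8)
The plan is to solve \eqref{equ:Laplace} by separation of variables, expanding both $g$ and the sought solution $v$ in spherical harmonics on $\mathbb S^{n-1}$, and then solving the resulting sequence of ODEs in the radial variable. Write $g(r\omega)=\sum_{j\ge 0} g_j(r)\,Y_j(\omega)$ where $\{Y_j\}$ runs over an orthonormal basis of spherical harmonics with $-\Delta_{\mathbb S^{n-1}}Y_j=\lambda_j Y_j$, and correspondingly seek $v(r\omega)=\sum_{j\ge 0}v_j(r)Y_j(\omega)$. Then each coefficient must satisfy the Euler-type ODE
\[
v_j''(r)+\frac{n-1}{r}v_j'(r)-\frac{\lambda_j}{r^2}v_j(r)=g_j(r),\qquad r>1.
\]
The homogeneous solutions are powers $r^{\alpha_j^{\pm}}$ with $\alpha_j^{\pm}$ the two roots of $\alpha(\alpha-1)+(n-1)\alpha-\lambda_j=0$; I would pick the particular solution given by the variation-of-parameters (Green's function) formula for this ODE on $(1,\infty)$, choosing limits of integration so that the decaying branch is selected and no spurious growth is introduced. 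The pointwise bound \eqref{Equ-cond-g} gives $|g_j(r)|\le \|g(r\cdot)\|_{L^2(\mathbb S^{n-1})}\lesssim \|g(r\cdot)\|_{L^p(\mathbb S^{n-1})}\le c_0 r^{-k_1}(\ln r)^{k_2}$ when $p\ge 2$, and more precisely control of the tail of the harmonic expansion. Integrating a term of the form $s^{-k_1}(\ln s)^{k_2}$ against the Green kernel $\sim s^{n-1}(\max(r,s))^{\alpha^-}(\min(r,s))^{\alpha^+}/(\alpha^+-\alpha^-)$ produces exactly a factor $r^{2-k_1}(\ln r)^{k}$, with the logarithmic power $k$ jumping up by one precisely in the resonance cases where the exponent $2-k_1-n(j-\text{something})$ coincides with a homogeneous exponent — these are the cases $k_1\in\mathbb N$ (so that $s^{1-k_1+\text{shift}}$ hits $s^{-1}$ and the radial integral $\int s^{-1}(\ln s)^{k_2}\,ds$ contributes $(\ln r)^{k_2+1}/(k_2+1)$), and $k_1=2$ in dimension $n=2$ where the resonance with the $j=0$ mode doubles the logarithm; the enumerated formula \eqref{equ:def-expone-k} is just the bookkeeping of which of these happen.

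The main technical obstacle is not any single ODE but the \emph{uniformity in $j$} needed to sum the series and conclude that $v$ is a genuine smooth function satisfying \eqref{Equ-exist-1}. One must track how the constants in the variation-of-parameters bound depend on $\lambda_j$ (equivalently on $\alpha_j^+-\alpha_j^-\sim 2\sqrt{\lambda_j}$, which grows, so the Green's function actually improves), on the growth of the number of harmonics of degree $d$ (polynomial in $d$), and on the $L^p\to L^\infty$ and elliptic regularity estimates on $\mathbb S^{n-1}$ that convert $L^2$ control of $g_j$ into summable bounds. The standard device here, as in the spherical-harmonic arguments of Caffarelli--Li and of Bao--Li--Zhang, is: for each fixed annulus $\{R\le |x|\le 2R\}$ use interior Schauder/$W^{2,p}$ estimates for $\Delta$ together with \eqref{Equ-cond-g} on the slightly larger annulus to get $C^k$ bounds on a finite truncation, show the high-frequency tail converges geometrically because $\lambda_j^{-1}$ decays, and then rescale; the scaling $x\mapsto Rx$ reduces every annulus to a fixed one and produces the factor $R^{2-k_1}(\ln R)^k$ by homogeneity of $\Delta$. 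I would also need to separately check that the finitely many low modes $j$ for which $\alpha_j^-\ge 2-k_1$ (so the "homogeneous" part of $v_j$ would dominate) can be handled — either they don't occur under the hypotheses, or one simply subtracts off the offending harmonic polynomial, which is itself a solution of $\Delta=0$ and thus harmless.

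Concretely, the steps in order: (1) set up the spherical-harmonic decomposition and record $\lambda_j$, $\alpha_j^\pm$, and the dimension counts; (2) write the Green's function for the radial ODE and the explicit particular solution $v_j$, choosing the integration endpoints to kill the growing homogeneous mode; (3) estimate $|v_j(r)|$ and its first two radial derivatives by plugging \eqref{Equ-cond-g} into the Green's-function integral, isolating the resonant exponents to extract the correct power $k$ of $\ln r$ as in \eqref{equ:def-expone-k}; (4) prove uniform-in-$j$ decay of these bounds using the growth of $\sqrt{\lambda_j}$ in the denominator plus elliptic estimates on the sphere, so that $\sum_j v_j(r)Y_j$ converges in $C^2_{loc}$ of the exterior domain and defines a smooth $v$ with $\Delta v=g$; (5) assemble the pieces, using the annulus-rescaling argument to get the clean global bound \eqref{Equ-exist-1}, and note that the finitely many low harmonic modes contribute only harmonic polynomials which may be absorbed. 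The heart of the matter, and the step I expect to spend the most care on, is (3)--(4): getting the logarithm exponent exactly right in every resonance case and simultaneously keeping all estimates summable over $j$.
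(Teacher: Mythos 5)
Your overall architecture (spherical harmonic expansion, radial Euler ODEs solved by variation of parameters with integration limits chosen to kill the growing branch, resonance bookkeeping for the logarithm, and a rescaled-annulus elliptic estimate at the end) matches the paper's, and you correctly identify the summability over modes as the crux. But the mechanism you propose for that crux has a genuine gap. You plan to bound each $v_j(r)$ pointwise using only the uniform per-mode bound $|g_j(r)|\le\|g(r\cdot)\|_{L^2(\mathbb S^{n-1})}\le c_0 r^{-k_1}(\ln r)^{k_2}$ and then sum the series, claiming the tail ``converges geometrically because $\lambda_j^{-1}$ decays.'' It does not: the Green's-function gains are only polynomial in the degree (one factor from the Wronskian-type denominator, one from the radial integral), while the number of harmonics of degree $d$ grows like $d^{n-2}$ and $\|Y\|_{L^\infty}$ grows like $d^{(n-2)/2}$, so the mode-by-mode pointwise series is not summable from these bounds alone. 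You cannot rescue this by invoking smoothness of $g$ to get rapid decay of the coefficients, because the constant in \eqref{Equ-exist-1} must depend only on $n,k_1,k_2,p$, and the hypothesis \eqref{Equ-cond-g} gives no quantitative control of derivatives of $g$; nor can you use interior Schauder estimates on annuli, since no H\"older bound on $g$ is available.

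The paper's resolution, which is the missing idea, is to never sum pointwise: it estimates $\|v(r\cdot)\|_{L^2(\mathbb S^{n-1})}^2=\sum_{k,m}a_{k,m}(r)^2$ directly, inserting an $\epsilon$-weighted Cauchy--Schwarz inside each radial Green's-function integral so that after summing over $(k,m)$ one can apply Parseval to $\sum_{k,m}b_{k,m}(\tau)^2=\|g(\tau\cdot)\|_{L^2(\mathbb S^{n-1})}^2$ (this is where $p\ge 2$ enters). This yields an $L^2$-in-angle bound $\|v(r\cdot)\|_{L^2(\mathbb S^{n-1})}\lesssim c_0 r^{2-k_1}(\ln r)^{k}$ with the claimed log exponent, and the pointwise bound \eqref{Equ-exist-1} is then obtained on rescaled annuli by the local maximum principle / weak Harnack inequality (Theorem 8.17 of Gilbarg--Trudinger) applied to $\Delta v_r=g_r$, with $\|v_r\|_{L^2}$ controlled by the angular $L^2$ estimate and $\|g_r\|_{L^p}$ by \eqref{Equ-cond-g}; this is exactly where $p>\frac n2$ is used. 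Your final step gestures at the right tool (annulus rescaling with $W^{2,p}$-type estimates), but its required input is an integral bound on $v$ over the annulus, which your argument never produces; without the Parseval-based $L^2(\mathbb S^{n-1})$ estimate the proof does not close. (Also note the paper only needs to prove the case $n\ge 3$, $0<k_1\le 2$, citing earlier work for $n=2$ and for $k_1>2$, and in that range the resonances producing the extra logarithm are $k_1\in\{1,2\}$, handled through the low modes whose integrals are taken from a fixed base point.)
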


\begin{proof} The result on $n=2$  can be found in Lemma 2.1 in \cite{Liu-Bao-2021-Dim2-MeanCur} and the result on $n\geq 3$ with $k_1>2$ can be found in Lemma 3.1 in \cite{Liu-Bao-2020-ExpansionSPL}. Hence we only need to prove for $n\geq 3$ and $0<k_1\leq 2$ case.

Let $\Delta_{\mathbb{S}^{n-1}}$ be the Laplace--Beltrami operator on unit sphere $\mathbb{S}^{n-1}\subset\mathbb{R}^n$ and
$$
\Lambda_0=0,~\Lambda_1=n-1,~\Lambda_2=2n,~\cdots,~\Lambda_k=k(k+n-2),~\cdots,
$$
be the sequence of eigenvalues of $-\Delta_{\mathbb S^{n-1}}$ with eigenfunctions
\begin{equation*}Y_1^{(0)}=1,~Y_{1}^{(1)}(\theta),~Y_{2}^{(1)}(\theta),~\cdots,~ Y_{n}^{(1)}(\theta),~\cdots,~Y_{1}^{(k)}(\theta),~\cdots,~Y_{m_k}^{(k)}(\theta),~\cdots,
\end{equation*}
i.e.,
$$
-\Delta_{\mathbb{S}^{n-1}}Y_m^{(k)}(\theta)=\Lambda_kY_m^{(k)}(\theta)\quad\forall~
m=1,2,\cdots,m_k.
$$
The family of eigenfunctions forms a complete standard orthogonal basis of $L^2(\mathbb{S}^{n-1})$.

Expand $g$ and the wanted solution $v$ into
\begin{equation}\label{equ-star}
v(x)=\sum_{k=0}^{+\infty}\sum_{m=1}^{m_{k}} a_{k, m}(r) Y_{m}^{(k)}(\theta)\quad\text{and}\quad
g(x)=\sum_{k=0}^{+\infty}\sum_{m=1}^{m_{k}} b_{k, m}(r) Y_{m}^{(k)}(\theta),
\end{equation}
where
 $r=|x|, \theta=\frac{x}{|x|}$ and

 $$a_{k,m}(r):=\int_{\mathbb{S}^{n-1}} v(r \theta) \cdot Y_{m}^{(k)}(\theta) \mathrm d  \theta,\quad b_{k,m}(r):=\int_{\mathbb{S}^{n-1}} g(r\theta) \cdot Y_{m}^{(k)}(\theta)  \mathrm d  \theta.$$
In spherical coordinates,

$$
\Delta v=\partial_{rr}v+\dfrac{n-1}{r}\partial_rv+\dfrac{1}{r^2}\Delta_{\mathbb{S}^{n-1}}v
$$
and \eqref{equ:Laplace} becomes

$$
\sum_{k=0}^{\infty} \sum_{m=1}^{m_{k}}\left(a_{ k,m}^{\prime \prime}(r)+\frac{n-1}{r} a_{ k,m}^{\prime}(r)-\frac{\Lambda_{k}}{r^{2}} a_{k,m}(r)\right) Y_{m}^{(k)}(\theta)=
\sum_{k=0}^{+\infty} \sum_{m=1}^{m_{k}} b_{k, m}(r) Y_{m}^{(k)}(\theta).
$$
By the linearly independence of eigenfunctions, for all $k\in\mathbb{N}$ and $m=1,2,\cdots,m_k$,
\begin{equation}\label{Equ-equ-equ}
a_{k,m}^{\prime \prime}(r)+\frac{n-1}{r} a_{ k,m}^{\prime}(r)-\frac{\Lambda_{k}}{r^{2}} a_{k,m}(r) =b_{k,m}(r)\quad\text{in }r>1.
\end{equation}

By solving the ODE, there exist constants $C_{k,m}^{(1)},C_{k,m}^{(2)}$ such that for all $r>1$,
\begin{equation}\label{Equ-def-Wronski}
\begin{array}{lll}
  a_{k,m}(r)&=&C_{k,m}^{(1)}r^k+
C_{k,m}^{(2)}r^{2-n-k}\\
&&\displaystyle
-\dfrac{1}{2-n}r^k\int_{2}^r\tau^{1-k}b_{k,m}(\tau) \mathrm d  \tau
+\dfrac{1}{2-n}r^{2-k-n}\int_{2}^r\tau^{k+n-1}b_{k,m}(\tau) \mathrm d  \tau.
\end{array}
\end{equation}
 By \eqref{Equ-cond-g},
\begin{equation}\label{Equ-converge}
\sum_{k=0}^{+\infty}\sum_{m=1}^{m_k}|b_{k,m}(r)|^2=||g(r\cdot)||^2_{L^2(\mathbb{S}^{n-1})}
\leq c_0^2\omega_n^{\frac{p-2}{p}}r^{-2k_1}(\ln r)^{2k_2}
\end{equation}
for all $r>1$. Then

\begin{equation}\label{equ:integrable-cond}
r^{1-k}b_{k,m}(r)\in L^1(2,+\infty)\quad\text{for }\left\{
\begin{array}{llll}
  k\geq 1, & \text{if }1<k_1\leq 2,\\
  k\geq 2, & \text{if }0<k_1\leq 1.\\
\end{array}
\right.
\end{equation}
We choose $C_{k,m}^{(1)}$ and $C_{k,m}^{(2)}$ in \eqref{Equ-def-Wronski} such that
\begin{equation}\label{Equ-def-v-2}
a_{k,m} (r):=
- \dfrac{1}{2-n}r^k\int_{+\infty}^r\tau^{1-k}b_{k,m}(\tau) \mathrm d  \tau
+ \dfrac{1}{2-n}r^{2-k-n}\int_{2}^r\tau^{k+n-1}b_{k,m}(\tau) \mathrm d  \tau
\end{equation}
for all $k$ verifying \eqref{equ:integrable-cond} and
\begin{equation}\label{Equ-def-v-3}
a_{k,m} (r):=
- \dfrac{1}{2-n}r^k\int_{2}^r\tau^{1-k}b_{k,m}(\tau) \mathrm d  \tau
+ \dfrac{1}{2-n}r^{2-k-n}\int_{2}^r\tau^{k+n-1}b_{k,m}(\tau) \mathrm d  \tau
\end{equation}
for all rest $k$.

For $1<k_1<2$ case,
we may pick $0<\epsilon:=\frac{1}{2}\min\{1,\mathtt{dist}(k_1,\mathbb N)\}$ such that

$$
\left\{
\begin{array}{lllll}
  3-2k_1-\epsilon >-1,\\
  3-2k-2k_1+\epsilon<-1, & \text{for }k\geq 1,\\
  2k+2n-2k_1-1-\epsilon>-1, & \text{for }k\geq 1.\\
\end{array}
\right.
$$
Then by \eqref{Equ-converge} and H\"older inequality, we have

\begin{align*}
&\displaystyle a_{0,1}^2(r)+\sum_{k=1}^{+\infty}\sum_{m=1}^{m_k}a_{k,m}^2(r)\\
  \leq & \displaystyle  2\left|\int_{2}^{r} \tau^{1} b_{0,1}(\tau) \mathrm d \tau\right|^2+2\left|r^{2-n}\int_{2}^r\tau^{n-1} b_{0,1}(\tau)\mathrm d\tau\right|^2
  +2\sum_{k=1}^{+\infty}\sum_{m=1}^{m_k}r^{2k}\left|
  \int_r^{+\infty}\tau^{1-k}b_{k,m}(\tau)\mathrm d\tau
  \right|^2\\
&\displaystyle +
 2\sum_{k=1}^{+\infty}\sum_{m=1}^{m_k}r^{2(2-k-n)}\left|
  \int_{2}^{r} \tau^{k+n-1} b_{k, m}(\tau) \mathrm  d  \tau
  \right|^2\\
\leq &\displaystyle 2\int_2^r\tau^{3-2k_1-\epsilon}(\ln\tau)^{2k_2}\mathrm d\tau\cdot \int_2^r\tau^{2k_1}(\ln\tau)^{-2k_2}b_{0,1}^2(\tau)\frac{\mathrm d\tau}{\tau^{1-\epsilon}}\\
&\displaystyle +2r^{4-2n}\int_2^r\tau^{2n-2k_1-1-\epsilon}(\ln\tau)^{2k_2}\mathrm d\tau\cdot \int_2^r\tau^{2k_1}(\ln\tau)^{-2k_2}b_{0,1}^2(\tau)\frac{\mathrm d\tau}{\tau^{1-\epsilon}}\\
&\displaystyle +2\sum_{k=1}^{+\infty}\sum_{m=1}^{m_k}r^{2k}
\int_r^{+\infty}\tau^{3-2k-2k_1+\epsilon}(\ln\tau)^{2k_2}\mathrm d\tau
\int_r^{+\infty}\tau^{2k_1}(\ln\tau)^{-2k_2}b_{k,m}^2(\tau)\frac{\mathrm d\tau}{\tau^{1+\epsilon}}\\
&\displaystyle +2\sum_{k=1}^{+\infty}\sum_{m=1}^{m_k} r^{4-2k-2n}
\int_2^r\tau^{2k+2n-2k_1-1-\epsilon}(\ln\tau)^{2k_2}\mathrm  d\tau
\int_2^r\tau^{2k_1}(\ln\tau)^{-2k_2}b_{k,m}^2(\tau)\frac{\mathrm d\tau}{\tau^{1-\epsilon}}\\
\leq &\displaystyle Cr^{4-2k_1-\epsilon}(\ln r)^{2k_2} \int_2^r\tau^{2k_1}(\ln\tau)^{-2k_2}b_{k,m}^2(\tau)\frac{\mathrm d\tau}{\tau^{1-\epsilon}}\\
&+\displaystyle C\sum_{k=1}^{+\infty}\sum_{m=1}^{m_k} r^{4-2k_1+\epsilon}(\ln\tau)^{2k_2}
\int_r^{+\infty}\tau^{2k_1}(\ln\tau)^{-2k_2}b_{k,m}^2(\tau)\frac{\mathrm d\tau}{\tau^{1+\epsilon}}\\
&+\displaystyle C\sum_{k=1}^{+\infty}\sum_{m=1}^{m_k}
r^{4-2k_1-\epsilon}(\ln\tau)^{2k_2}
\int_2^{r}\tau^{2k_1}(\ln\tau)^{-2k_2}b_{k,m}^2(\tau)\frac{\mathrm d\tau}{\tau^{1-\epsilon}}\\
\leq & \displaystyle
Cr^{4-2k_1-\epsilon}(\ln r)^{2k_2}\int_2^r\tau^{2k_1}(\ln\tau)^{-2k_2}
\sum_{k=0}^{+\infty}\sum_{m=1}^{m_k}b_{k,m}^2(\tau)\frac{\mathrm d\tau}{\tau^{1-\epsilon}}\\
&+\displaystyle
Cr^{4-2k_1+\epsilon}(\ln r)^{2k_2} \int_r^{+\infty}
\tau^{2k_1}(\ln\tau)^{-2k_2}\sum_{k=0}^{+\infty}\sum_{m=1}^{m_k} b_{k,m}^2(\tau)\frac{\mathrm d\tau}{\tau^{1+\epsilon}}\\
\leq & Cc_0^2\cdot r^{4-2k_1}(\ln r)^{2k_2}.\\
\end{align*}

For $0<k_1<1$ case, we may pick $0<\epsilon:=\frac{1}{2}\min\{1,\mathtt{dist}(k_1,\mathbb N)\}$ such that

$$
\left\{
\begin{array}{lllll}
  1-2k_1-\epsilon>-1,\\
  3-2k-2k_1+\epsilon<-1, & \text{for }k\geq 2,\\
  2k+2n-2k_1-1-\epsilon>-1, & \text{for }k\geq 1,\\
\end{array}
\right.
$$
and change $a_{1,m}$ with $m=1,2,\cdots,n$ into
\eqref{Equ-def-v-3}.
The estimates of $\displaystyle a_{0,1}^2(r)+\sum_{k=1}^{+\infty}\sum_{m=1}^{m_k}a_{k,m}^2(r)$ follow similarly.

For $k_1=2$ case, we may pick $0<\epsilon:=\frac{1}{2} $ such that

$$
\left\{
\begin{array}{lllll}
  2n-2k_1-1-\epsilon>-1,\\
  3-2k-2k_1+\epsilon<-1, & \text{for }k\geq 1,\\
  2k+2n-2k_1-1-\epsilon>-1, & \text{for }k\geq 1,\\
\end{array}
\right.
$$
and use the following estimates of $a_{0,1}^2$.

$$
\begin{array}{llll}
  a_{0,1}^2(r) & \leq & \displaystyle    2\left|\int_{2}^{r} \tau^{1} b_{0,1}(\tau) \mathrm d \tau\right|^2+2\left|r^{2-n}\int_{2}^r\tau^{n-1} b_{0,1}(\tau)\mathrm  d\tau\right|^2\\
&\leq &\displaystyle 2\int_2^r\tau^{-1}(\ln\tau)^{2k_2} \mathrm d\tau\cdot \int_2^r\tau^{2k_1}(\ln\tau)^{-2k_2}b_{0,1}^2(\tau)\frac{\mathrm  d\tau}{\tau}\\
&&\displaystyle +2r^{4-2n}\int_2^r\tau^{2n-2k_1-1-\epsilon}(\ln\tau)^{2k_2}\mathrm  d\tau\cdot \int_2^r\tau^{2k_1}(\ln\tau)^{-2k_2}b_{0,1}^2(\tau)\frac{ \mathrm d\tau}{\tau^{1-\epsilon}}\\
&\leq& \displaystyle  Cc_0^2\cdot r^{4-2k_1}(\ln r)^{2k_2+2}.\\
\end{array}
$$
The rest parts of estimate follow similarly.

For $k_1=1$ case, we may pick $0<\epsilon:=\frac{1}{2} $ such that

$$
\left\{
\begin{array}{lllll}
  3-2k_1-\epsilon >-1,\\
  3-2k-2k_1+\epsilon<-1, & \text{for }k\geq 2,\\
  2k+2n-2k_1-1-\epsilon>-1, & \text{for }k\geq 1,\\
\end{array}
\right.
$$
and we use the following estimates of $\displaystyle \sum_{m=1}^{m_1}a^2_{1,m}$.
\begin{align*}
 \displaystyle \sum_{m=1}^{m_1}a_{1,m}^2(r) &\leq  \displaystyle
  2\sum_{m=1}^{m_1} r^{2}\left|
\int_{2}^rb_{1,m}(\tau) \mathrm  d\tau
  \right|^2+ 2\sum_{m=1}^{m_1}r^{2-2n}\left|
  \int_{2}^{r} \tau^{n} b_{1, m}(\tau) \mathrm  d  \tau
  \right|^2\\
&\leq  \displaystyle 2\sum_{m=1}^{m_1} r^{2}\int_2^r\tau^{-1}(\ln\tau)^{2k_2} \mathrm d\tau
\int_2^r\tau^{2k_1}(\ln\tau)^{-2k_2}b_{1,m}^2(\tau)
\frac{\mathrm d\tau}{\tau}\\
& \displaystyle +2\sum_{m=1}^{m_1} r^{2-2n}
\int_2^r\tau^{2n-2k_1+1-\epsilon}(\ln\tau)^{2k_2} \mathrm  d\tau
\int_2^r\tau^{2k_1}(\ln\tau)^{-2k_2}b_{1,m}^2(\tau)\frac{\mathrm d\tau}{\tau^{1-\epsilon}}\\
&\leq   Cc_0^2\cdot r^{4-2k_1}(\ln r)^{2k_2+2}.\\
\end{align*}
The rest parts of estimate follow similarly.

Consequently, $v(r)$ is well-defined, is a solution of \eqref{equ:Laplace} in distribution sense \cite{Gunther-ConformalNormalCoord} and satisfies
\begin{equation}\label{equ-estimate-spherical}
\begin{array}{llllll}
||v(r\cdot)||^2_{L^2(\mathbb S^{n-1})}& \leq& \left\{
\begin{array}{lll}
  C c_0^2 \cdot r^{4-2k_1}(\ln r)^{2k_2}, & k_1\not\in\{1,2\},\\
  C c_0^2 \cdot r^{4-2k_1}(\ln r)^{2k_2+2}, & k_1\in\{1,2\}.
\end{array}
\right.\\
\end{array}
\end{equation}
By interior regularity theory of elliptic differential equations,  $v$ is smooth \cite{Book-Gilbarg-Trudinger}. It remains to prove the pointwise decay rate at infinity.

For any $r\gg 1$, we set

$$
v_r(x):=v(rx)\quad\forall~x\in B_4\setminus B_{1}=:D.
$$
Then $v_r$ satisfies
\[
\Delta v_r=r^2g(rx)=:g_r(x)\quad\text{in}~D.
\]
By weak Harnack inequality (see for instance Theorem 8.17 of \cite{Book-Gilbarg-Trudinger}, see also (2.11) of \cite{Gunther-ConformalNormalCoord}),

$$
\sup_{2<|x|<3}|v_r(x)|\leq C(n,p)\cdot \left(
||v_r||_{L^2(D)}+||g_r||_{L^p(D)}
\right).
$$
By \eqref{equ-estimate-spherical},
\begin{align*}
  ||v_r||_{L^2(D)}^2 & =  \displaystyle  \dfrac{1}{r^n}\int_{B_{4r}\setminus B_{r}}|v(x)|^2\mathrm  d  x\\
  &= \displaystyle r^{-n}\int_{ r}^{4r}||v(\tau\theta)||_{L^2(\mathbb S^{n-1})}^2\cdot \tau^{n-1}\mathrm  d  \tau\\
  &\leq
  \left\{
  \begin{array}{lll}
  \displaystyle Cc_0^2\cdot r^{-n}\int_{ r}^{4r}
  \tau^{4-2k_1}(\ln\tau)^{2k_2}\cdot \tau^{n-1}\mathrm  d  \tau,
  & k_1\not\in\{1,2\},\\
  \displaystyle Cc_0^2\cdot r^{-n}\int_{ r}^{4r}
  \tau^{4-2k_1}(\ln\tau)^{2k_2+2}\cdot \tau^{n-1} \mathrm d  \tau,
  & k_1\in\{1,2\},\\
  \end{array}
  \right.\\
  &\leq \left\{
  \begin{array}{llll}
    Cc_0^2\cdot  r^{4-2k_1}(\ln r)^{2k_2},  & k_1\not\in\{1,2\}\\
    Cc_0^2\cdot  r^{4-2k_1}(\ln r)^{2k_2+2}, & k_1\in\{1,2\}.\\
  \end{array}
  \right.
\end{align*}
By \eqref{Equ-cond-g},
\[
\begin{array}{llll}
  ||g_r||_{L^p(D)}^p & =\displaystyle
  \dfrac{r^{2p}}{r^n}\int_{B_{4r}\setminus B_{ r}}|g(x)|^p \mathrm d  x\\
  &\leq  \displaystyle Cc_0^p \cdot r^{2p-n}\int_{ r}^{4r}\tau^{-pk_1}(\ln\tau)^{pk_2}\cdot \tau^{n-1} \mathrm  d  \tau\\
  &\leq Cc_0^p\cdot r^{2p-pk_1}(\ln r)^{pk_2}.\\
\end{array}
\]
Combining the estimates above, we have
\[
\begin{array}{llll}
&\displaystyle \sup_{2r<|x|<3r}|v(x)| =
\sup_{2<|x|<3}|v_r(x)|\\
\leq & \displaystyle
\left\{
\begin{array}{llll}
  Cc_0r^{2-k_1}(\ln r)^{k_2}+Cc_0r^{2-k_1}(\ln r)^{k_2},
  & k_1\not\in\{1,2\},\\
  Cc_0r^{2-k_1}(\ln r)^{k_2+1}+Cc_0r^{2-k_1}(\ln r)^{k_2},
  & k_1\in\{1,2\},\\
\end{array}
\right.\\
\end{array}
\]
where $C$ relies only on $n,k_1,k_2$ and $p$.
This finishes the proof of Lemma \ref{Lem-existence-fastdecay}.
\end{proof}

Similar to Lemma 3.2 in \cite{Liu-Bao-2020-ExpansionSPL}, by interior estimate we have the vanishing speed for higher order derivatives as below.

\begin{lemma}\label{lem-existence-fastd-deri}
  Let $g\in C^{\infty}(\mathbb R^n)$ satisfy
\[
  g(x)=O_{l}(|x|^{-k_1}(\ln |x|)^{k_2})\quad\text{as}~|x|\rightarrow+\infty
\]
  for some $k_1>0, k_2\geq 0$, $l-1\in\mathbb N$.   Then
\[
v_g(x)=O_{l+1}(|x|^{2-k_1}(\ln|x|)^k),
\]
  where $v_g$ denotes the solution found in Lemma \ref{Lem-existence-fastdecay} and $k$ is as in \eqref{equ:def-expone-k}.
\end{lemma}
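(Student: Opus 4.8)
The plan is to bootstrap the $C^0$ decay estimate already furnished by Lemma \ref{Lem-existence-fastdecay} into an estimate on all derivatives up to order $l+1$, via interior elliptic estimates applied on fixed annuli after a scaling normalization. Recall from Lemma \ref{Lem-existence-fastdecay} that $v_g$ is smooth and satisfies $|v_g(x)|\le Cc_0|x|^{2-k_1}(\ln|x|)^k$ for large $|x|$, with $k$ as in \eqref{equ:def-expone-k}; note that in every case of \eqref{equ:def-expone-k} one has $k\ge k_2$, which will be used repeatedly.

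First I would rescale as in the proof of Lemma \ref{Lem-existence-fastdecay}: for $r\gg 1$ put $v_r(x):=v_g(rx)$ and $g_r(x):=r^2g(rx)$ on the fixed annulus $D:=B_4\setminus\overline{B_1}$, so that $\Delta v_r=g_r$ in $D$ by \eqref{equ:Laplace}. Since $g=O_l(|x|^{-k_1}(\ln|x|)^{k_2})$, for each $0\le j\le l$ and $x\in D$ we have $|D^jg_r(x)|=r^{2+j}|D^jg(rx)|\le Cr^{2+j}(r|x|)^{-k_1-j}(\ln(r|x|))^{k_2}\le Cr^{2-k_1}(\ln r)^{k_2}$; using $g\in C^\infty$ and that $D$ is bounded, the Hölder seminorm of $D^{l-1}g_r$ is controlled by $\sup_D|D^lg_r|$, hence $\|g_r\|_{C^{l-1,\alpha}(D)}\le Cr^{2-k_1}(\ln r)^{k_2}$ for any fixed $\alpha\in(0,1)$. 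On the other hand the $C^0$ bound of Lemma \ref{Lem-existence-fastdecay} gives $\|v_r\|_{L^\infty(D)}\le Cc_0r^{2-k_1}(\ln r)^k$.

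Next I would invoke the interior Schauder estimate for the Poisson equation (see e.g. \cite{Book-Gilbarg-Trudinger}; alternatively iterated interior $W^{2,p}$ estimates as in Lemma \ref{Lem-existence-fastdecay}) on $B_3\setminus\overline{B_2}\Subset D$: there is $C=C(n,l,\alpha)$, independent of $r$ because the annuli are fixed, with
\[
\|v_r\|_{C^{l+1,\alpha}(B_3\setminus\overline{B_2})}\le C\big(\|v_r\|_{L^\infty(D)}+\|g_r\|_{C^{l-1,\alpha}(D)}\big)\le Cc_0r^{2-k_1}(\ln r)^k,
\]
where $k\ge k_2$ was used to absorb the forcing term. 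Undoing the scaling, for $2\le|x|\le3$ and $1\le j\le l+1$ one has $D^jv_g(rx)=r^{-j}D^jv_r(x)$, so $|D^jv_g(rx)|\le Cc_0r^{2-k_1-j}(\ln r)^k$; setting $y=rx$, so $|y|\in[2r,3r]$ and $\ln|y|\asymp\ln r$, this gives $|D^jv_g(y)|\le C|y|^{2-k_1-j}(\ln|y|)^k$. Combined with the $j=0$ bound from Lemma \ref{Lem-existence-fastdecay}, and since every $y$ of large modulus lies in such an annulus, this is exactly $v_g=O_{l+1}(|x|^{2-k_1}(\ln|x|)^k)$.

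The argument is essentially bookkeeping, along the lines of Lemma 3.2 of \cite{Liu-Bao-2020-ExpansionSPL}; I do not anticipate a genuine obstacle. The only points needing care are that the interior-estimate constant be uniform in $r$ (guaranteed because the annuli are frozen after rescaling) and that the powers of $\ln r$ combine correctly — here the definition of $k$ in \eqref{equ:def-expone-k} always satisfies $k\ge k_2$, so the inhomogeneous term $g_r$ never contributes a higher power of $\ln r$ than the homogeneous part, and the exponent $k$ in the conclusion is inherited unchanged from Lemma \ref{Lem-existence-fastdecay}.
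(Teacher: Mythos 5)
Your argument is correct and is essentially the proof the paper intends: the paper disposes of this lemma by citing the scaling-plus-interior-estimate argument of Lemma 3.2 in \cite{Liu-Bao-2020-ExpansionSPL}, which is exactly your rescaling to the fixed annulus $B_4\setminus\overline{B_1}$ followed by interior Schauder estimates and the observation that $k\geq k_2$ keeps the logarithmic exponent unchanged. No gaps; the bookkeeping on the scaling of $D^jg_r$ and $D^jv_r$ and the uniformity of the Schauder constant are handled correctly.
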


\section{Proof for $n\geq 3$ case}\label{seclabel-n>3}

In this section, we prove Theorem \ref{thm:main-n>3}. By Theorem 3.1 and Remark 3.3 in \cite{Liu-Bao-2021-Expansion-LagMeanC} (see also Corollary 2.1 in \cite{Li-Li-Yuan-BernsteinThm} or Theorem 2.2 in \cite{Jia-Xiaobiao-AsymGeneralFully}), we have the following result on linear elliptic equations.

\begin{theorem}\label{thm:linearLimit}
  Let $v$ be a classical solution of
  \begin{equation}\label{Dirichlet}
   a_{i j}(x) D_{i j} v=f(x)\quad\text{in }\mathbb R^n,
  \end{equation}
  that is bounded from at least one side or $|Dv(x)|=O(|x|^{-1})$ as $|x|\rightarrow\infty$,
 where $n\geq 3$,  the coefficients are  uniformly elliptic,
  satisfying $
  ||a_{ij}||_{C^{\alpha}(\mathbb{R}^n)}<\infty$
  for some $0<\alpha <1$
  and
  \begin{equation}\label{short-RangeCoefficient}
  a_{ij}(x)=a_{ij}(\infty)+O(|x|^{-\varepsilon})\quad\text{as }|x|\rightarrow\infty,
  \end{equation}
  for some $\varepsilon>0$ and  $0<[a_{ij}(\infty)]\in\mathtt{Sym}(n)$.
  Hereinafter, $[a_{ij}]$ denote the $n$ by  $n$ matrix with the $i,j$-position being $a_{ij}$.
  Assume that $f\in C^0(\mathbb R^n)$ satisfies
  \begin{equation}
  \label{decayoff_1}
 f(x)=O(|x|^{-\zeta})\quad\text{as }|x|\rightarrow\infty,
  \end{equation}
  for some $\zeta>2$.
  Then there exists a constant $v_{\infty}$ such that \begin{equation}\label{Result_ExteriorLiouville-2}
  v ( x ) = v _ { \infty } +
  \left\{
  \begin{array}{llll}
  O \left( |x|^{2-\min\{n,\zeta\}} \right), & \zeta\not=n,\\
  O \left( |x|^{2-n}(\ln|x|) \right), & \zeta=n,\\
  \end{array}
  \right.
  \end{equation}
  as $|x|\rightarrow\infty$.
\end{theorem}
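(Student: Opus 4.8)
The plan is to reduce \eqref{Dirichlet} to a perturbation of the Laplace equation, peel off a particular solution that decays (supplied by Lemma \ref{Lem-existence-fastdecay}), identify the limit via the spherical harmonic expansion of the remaining harmonic function, and then iterate to obtain the sharp rate. As a first reduction, since $[a_{ij}(\infty)]$ is positive definite symmetric, we may replace $v(x)$ by $v\bigl([a_{ij}(\infty)]^{-1/2}x\bigr)$ and assume $a_{ij}(\infty)=\delta_{ij}$; all hypotheses and the asserted conclusion are preserved.

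Next I would establish the preliminary bounds $|v(x)|\le C$ and $|Dv(x)|=O(|x|^{-1})$, $|D^{2}v(x)|=O(|x|^{-2})$ as $|x|\to\infty$. If $|Dv(x)|=O(|x|^{-1})$ is assumed, the derivative decay follows at once from interior $W^{2,p}$/Schauder estimates for $a_{ij}D_{ij}v=f$ on the balls $B_{|x|/4}(x)$, and $v$ has at most logarithmic growth (boundedness will then drop out of the iteration). If instead $v$ is bounded from one side, I would upgrade this to a two-sided bound as in \cite{Caffarelli-Li-ExtensionJCP,Bao-Li-Zhang-ExteriorBerns-MA}: subtracting a decaying solution $N$ of $a_{ij}D_{ij}N=f$ on the exterior domain (which exists and decays precisely because $\zeta>2$, via a Newtonian-potential construction for $a_{ij}D_{ij}$ analogous to Lemma \ref{Lem-existence-fastdecay}), the difference $v-N$ is a solution of the homogeneous equation on the exterior domain that is bounded from one side, hence bounded by a Liouville-type rigidity (scale-invariant Krylov--Safonov Harnack inequality, using $\|f\|_{L^{n}(B_R)}=O(R^{1-\zeta})\to 0$); then the interior estimates again give the derivative decay.

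Now write $\Delta v=g$ with $g:=f-(a_{ij}(x)-\delta_{ij})D_{ij}v$. By \eqref{short-RangeCoefficient}, \eqref{decayoff_1} and the Hessian bound, $g=O(|x|^{-2-\sigma})$ with $\sigma:=\min\{\varepsilon,\zeta-2\}>0$. Lemma \ref{Lem-existence-fastdecay} then produces a solution $w$ of $\Delta w=g$ on $\mathbb{R}^{n}\setminus\overline{B_1}$ that decays at infinity (at worst like $|x|^{-\min\{\sigma,n-2\}}$ times a logarithmic factor). Hence $v-w$ is harmonic on $\{|x|>1\}$ with at most logarithmic growth; expanding it in spherical harmonics and observing that for $n\ge 3$ the growing modes $r^{k}Y^{(k)}$ with $k\ge 1$ are excluded by the growth bound, we get $v-w=v_\infty+O(|x|^{2-n})$, so $v=v_\infty+O(|x|^{-\beta})$ for some $\beta>0$; in particular $v$ is now known to be bounded. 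This is the engine of an iteration: assuming $v-v_\infty=O(|x|^{-\beta})$, interior estimates for $v-v_\infty$ give $|D^{2}v(x)|=O(|x|^{-2-\beta})$, hence $g=O(|x|^{-k_1})$ with $k_1=\min\{\zeta,2+\varepsilon+\beta\}$, and running the previous step again yields $v-v_\infty=O(|x|^{2-\min\{n,k_1\}})$ up to the logarithmic factor of \eqref{equ:def-expone-k}; thus the decay exponent improves from $\beta$ to $\min\{n-2,\zeta-2,\varepsilon+\beta\}$, and after finitely many steps it saturates at $\min\{n,\zeta\}-2$, which is \eqref{Result_ExteriorLiouville-2}. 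Shrinking $\varepsilon$ slightly if necessary keeps every intermediate value $2+\varepsilon+\beta$ away from the resonant exponent $n$, so the logarithm predicted by \eqref{equ:def-expone-k} survives only at the last step and only when $k_1=n$, i.e.\ when $\zeta=n$; for $\zeta\ne n$ one obtains the clean power $|x|^{2-\min\{n,\zeta\}}$.

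I expect the main obstacle to be the preliminary step for the one-sided-bounded case: turning a one-sided bound into a uniform bound and then into $|D^{2}v(x)|=O(|x|^{-2})$ while only pointwise decay of $f$ (not Hölder or Sobolev control) is at hand. This forces one to work with the scale-invariant Harnack/$W^{2,p}$ theory (see e.g.\ \cite{Book-Gilbarg-Trudinger}) and to exploit the smallness $\|f\|_{L^{n}(B_R)}=O(R^{1-\zeta})$ with $\zeta>2$, exactly in the spirit of \cite{Caffarelli-Li-ExtensionJCP,Bao-Li-Zhang-ExteriorBerns-MA}. Once the a priori bounds are secured, the remaining argument is routine, its only fussy point being the bookkeeping of the logarithm in the borderline case $\zeta=n$.
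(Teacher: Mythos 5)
The paper itself does not prove Theorem \ref{thm:linearLimit}: it imports it from Theorem 3.1 and Remark 3.3 of \cite{Liu-Bao-2021-Expansion-LagMeanC} (see also \cite{Li-Li-Yuan-BernsteinThm,Jia-Xiaobiao-AsymGeneralFully}). Your overall architecture --- normalize $a_{ij}(\infty)=\delta_{ij}$, rewrite the equation as $\Delta v=g$ with $g=f-(a_{ij}-\delta_{ij})D_{ij}v$, peel off a decaying particular solution via Lemma \ref{Lem-existence-fastdecay}, discard the growing spherical-harmonic modes, and iterate to the sharp exponent with the resonance $\zeta=n$ producing the logarithm --- is exactly the machinery this paper uses in Sections \ref{seclabel-n>3}--\ref{seclabel-n=2}, and the final bookkeeping (saturation at $\min\{n,\zeta\}-2$, shrinking $\varepsilon$ to dodge intermediate resonances) is sound. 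However, two steps do not hold as written.

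First, and most seriously, in the one-sided-bounded case your upgrade to a two-sided bound is unjustified: a solution of the homogeneous equation on an exterior domain that is bounded from one side need \emph{not} be bounded under uniform ellipticity and the Krylov--Safonov Harnack inequality alone. For $a_{ij}(x)=\delta_{ij}+(\mu-1)\frac{x_ix_j}{|x|^2}$ with $\mu>n-1$, the function $|x|^{1-(n-1)/\mu}$ is a positive, unbounded solution on $\{|x|>1\}$, and the scale-invariant Harnack inequality holds for it; so the ``Liouville-type rigidity'' you invoke must genuinely use the asymptotic condition \eqref{short-RangeCoefficient} (e.g.\ a Gilbarg--Serrin type comparison with harmonic functions, or a blow-down argument on annuli where $a_{ij}(Ry)\to\delta_{ij}$), which your sketch never brings into this step; this is precisely what the cited references supply. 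Second, since $f$ is only $C^0$ with the pointwise bound \eqref{decayoff_1}, interior Schauder estimates are not applicable and interior $W^{2,p}$ estimates yield only annulus-averaged $L^p$ bounds on $D^2v$; the pointwise claims $|D^2v(x)|=O(|x|^{-2})$ and later $O(|x|^{-2-\beta})$ cannot be established, so $g$ is controlled only in scaled $L^p$ norms, whereas Lemma \ref{Lem-existence-fastdecay} assumes sphere-wise $L^p$ data for a smooth $g$. This is repairable (the lemma's ODE estimates and the weak Harnack step tolerate annulus-averaged data after minor modification, or one can run the endgame with barriers, which need only pointwise decay of $f$), but as written the step fails. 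A smaller omission of the same kind: the decaying exterior solution $N$ of $a_{ij}D_{ij}N=f$ is asserted rather than constructed; its existence again requires \eqref{short-RangeCoefficient} together with $\zeta>2$ (barriers of the form $C|x|^{2-\zeta'}$, $2<\zeta'<\min\{\zeta,n\}$, valid only where $a_{ij}$ is close to the identity), not merely an analogue of Lemma \ref{Lem-existence-fastdecay}.
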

\begin{lemma}\label{lem:initial-n>3}
  Let $u,f$ be as in Theorem \ref{thm:main-n>3} and $A,\epsilon$ be as in \eqref{equ-roughConverge}. Then there exists $\alpha>0$ such that for some $C>0$,
  \begin{equation}\label{equ:boundedC2Alpha}
  ||D^2u||_{C^{\alpha}(\mathbb R^n)}\leq C
  \end{equation}
  and
  \begin{equation}\label{equ:rough-derivat}
  u(x)-\frac{1}{2}x^TAx=O_{m+1}(|x|^{2-\epsilon})\quad\text{as }|x|\rightarrow\infty.
  \end{equation}
\end{lemma}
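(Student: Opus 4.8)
The starting point is the rough estimate \eqref{equ-roughConverge}; conjugating by $A^{1/2}$ and rescaling we may assume throughout that $A=I$ and $f(\infty)=1$, which is harmless since the two assertions of the lemma are invariant under such affine changes, so that $|u(x)-\frac12|x|^2|\le C|x|^{2-\epsilon}$ for $|x|\ge 1$. The plan is a two‑stage bootstrap: first upgrade this zeroth–order control to a uniform interior $C^{2,\alpha}$ bound by rescaling at infinity and invoking Caffarelli's interior estimates \cite{Caffarelli-InteriorEstimates-MA} for the Monge--Amp\`ere equation, and then feed the resulting uniformly elliptic, uniformly H\"older linearized operator into Schauder theory to propagate the decay to all derivatives up to order $m+1$.

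Stage 1. Fix $x_0$ with $R:=|x_0|$ large and set $u_{x_0}(y):=\frac{4}{R^2}u\big(x_0+\tfrac R2 y\big)$ for $y\in B_1$, so that $\det D^2u_{x_0}(y)=f\big(x_0+\tfrac R2 y\big)$. By \eqref{equ:cond-f} this right‑hand side converges to $1$ and its $C^\alpha(B_1)$ seminorm tends to $0$ as $R\to\infty$, while by \eqref{equ-roughConverge} we have $\|u_{x_0}-q_{x_0}\|_{L^\infty(B_1)}\le CR^{-\epsilon}\to 0$, where $q_{x_0}$ is $\frac12|y|^2$ plus an $x_0$‑dependent affine function; hence the normalized sections of $u_{x_0}$ are uniformly comparable to ellipsoids of a fixed shape, and Caffarelli's interior $C^{2,\alpha}$ estimate applies with constants independent of $x_0$, giving $\|u_{x_0}\|_{C^{2,\alpha}(B_{1/2})}\le C$. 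Undoing the scaling yields $\|D^2u\|_{L^\infty(\mathbb R^n\setminus B_1)}\le C$, $D^2u(x)\to I$ as $|x|\to\infty$, and the scale‑invariant bound $[D^2u]_{C^\alpha(B_{R/4}(x_0))}\le CR^{-\alpha}$; combining this with the interior regularity on a fixed bounded ball (\cite{Caffarelli-InteriorEstimates-MA,Jian-Wang-ContinuityEstimate-MA}, where $u$ is a smooth strictly convex solution of $\det D^2u=f>0$) gives \eqref{equ:boundedC2Alpha}. Iterating Schauder estimates on $u_{x_0}$, using $f\in C^m$, further gives $\|u_{x_0}\|_{C^{m+1,\alpha}(B_{1/2})}\le C$ uniformly in $x_0$, hence $|D^ku(x)|\le C|x|^{2-k}$ for $2\le k\le m+1$.

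Stage 2. Write $w:=u-\frac12|x|^2$, so that $a^{ij}(x)D_{ij}w=\det D^2u-1=f(x)-1$ with $a^{ij}(x):=\int_0^1\mathrm{cof}_{ij}\big((1-t)I+tD^2u(x)\big)\,dt$. Since $\mathrm{cof}$ is polynomial, Stage 1 shows the $a^{ij}$ are uniformly elliptic for $|x|$ large (because $D^2u\to I$) and that, after the dilation $y\mapsto Ry$, they lie in $C^{m-1,\alpha}$ of a fixed annulus $\Omega$ with norm bounded uniformly in $R$. Rescale $w$ by $w_R(y):=R^{\epsilon-2}w(Ry)$ on $\Omega$: then $\|w_R\|_{L^\infty(\Omega)}\le C$ and $a^{ij}(Ry)D_{ij}w_R=R^{\epsilon}(f(Ry)-1)=:\tilde g_R$, whose $C^{m-1,\alpha}(\Omega)$ norm is $\le CR^{\epsilon-\zeta}\to 0$ by \eqref{equ:cond-f} since $\epsilon<\zeta/3<\zeta$. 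Interior Schauder estimates (e.g.\ \cite{Book-Gilbarg-Trudinger}) then give $\|w_R\|_{C^{m+1,\alpha}(\Omega')}\le C$ on any $\Omega'\Subset\Omega$, uniformly in $R$, and undoing the scaling yields $|D^kw(x)|\le C|x|^{2-\epsilon-k}$ for $0\le k\le m+1$, which is \eqref{equ:rough-derivat}.

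I expect the main obstacle to be the uniformity in Stage 1: Caffarelli's $C^{2,\alpha}$ estimate must be applied with constants independent of the center $x_0$ (equivalently of the dilation parameter $R$), and it is precisely the rough estimate \eqref{equ-roughConverge} that buys this, since it pins every rescaled solution $u_{x_0}$ uniformly close to a single fixed quadratic form (up to an affine piece), so that the normalized sections are uniformly round. The remaining work is purely bookkeeping of scaling weights — in particular checking that the factor $R^{\alpha}$ produced by dilating a H\"older seminorm of $a^{ij}$ cancels the $R^{-\alpha}$ decay obtained in Stage 1, and that the exponent $2-\epsilon-k$ emerges correctly after undoing the scaling in Stage 2.
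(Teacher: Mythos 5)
Your proposal is correct and takes essentially the same route as the paper: rescale at infinity, use \eqref{equ-roughConverge} together with the Caffarelli and Jian--Wang interior estimates to get the uniform $C^{2,\alpha}$ bound \eqref{equ:boundedC2Alpha}, then linearize via Newton--Leibniz and run Schauder estimates to convert the $O(|x|^{2-\epsilon})$ closeness into decay of all derivatives up to order $m+1$, using $\epsilon<\zeta$. The only difference is organizational: the paper iterates Schauder on the successively differentiated Monge--Amp\`ere equation for the rescaled $v_R$, whereas you first extract uniform $C^{m+1,\alpha}$ bounds on the rescalings of $u$ (the same differentiation--Schauder bootstrap) and then apply one higher-order Schauder estimate to the linearized equation for $w$ on dilated annuli --- an equivalent argument.
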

\begin{proof}
  As proved in section \ref{seclabel-QuadraticTerm}, there exist $A,\epsilon$ such that \eqref{equ-roughConverge} holds.
  For sufficiently large $|x|>2$, set $R:=|x|$ and
  \begin{equation*}
u_{R}(y)=\left(\frac{4}{R}\right)^{2} u\left(x+\frac{R}{4} y\right), \quad|y| \leq 2.
\end{equation*}
Then by \eqref{equ-roughConverge}, there exists $C>0$ uniform to $R>2$ such that $$
||u_R||_{C^0(\overline{B_2})}\leq C.
$$
By a direct computation, $u_R$ satisfies
\begin{equation}\label{equ:scaled}
 \det D^2u_R(y)=f\left(x+\frac{R}{4}y\right)=:f_R(y)\quad\text{in }B_2.
\end{equation}
By taking $k=0,1$ in condition \eqref{equ:cond-f}, there exists $C>0$ uniform to $R>2$ such that
$$
\left\Vert f_R
-f(\infty)
\right\Vert_{C^0(\overline{B_2})}\leq CR^{-\zeta},
$$
and for any $0<\alpha<1$ and $y_1,y_2\in B_2$, $y_1\not=y_2$,
\begin{equation*}
\frac{\left|f_{R}\left(y_{1}\right)-f_{R}\left(y_{2}\right)\right|}{\left|y_{1}-y_{2}\right|^{\alpha}}=\frac{\left|f\left(z_{1}\right)-f\left(z_{2}\right)\right|}{\left|z_{1}-z_{2}\right|^{\alpha}} \cdot\left(\frac{R}{4}\right)^{\alpha} \leq C R^{-\zeta},
\end{equation*}
where $z_i:=x+\frac{R}{4}y_i\in B_{\frac{|x|}{2}}(x)$ for $i=1,2$. By the interior estimates by Caffarelli \cite{Caffarelli-InteriorEstimates-MA} and Jian--Wang \cite{Jian-Wang-ContinuityEstimate-MA} (see also the appendix in \cite{Bao-Li-Zhang-ExteriorBerns-MA}), we have
\begin{equation}\label{equ:boundScleC2Alpha}
\left\Vert
D^2u_R
\right\Vert_{C^{\alpha}(\overline{B_1})}
\leq C\left(1+\dfrac{||f_R||_{C^{\alpha}(B_2)}}{\alpha(1-\alpha)}\right)\leq C
\end{equation}
for some $C>0$ uniform to $R>2$. This yields \eqref{equ:boundedC2Alpha} by a direct computation.

Let
$$
v(x):=u(x)-\frac{1}{2}x^TAx\quad\text{and}\quad v_{R}(y):=\left(\frac{4}{R}\right)^{2} v\left(x+\frac{R}{4} y\right), \quad|y| \leq 2,
$$
where $R=|x|>2$ as above. Then by \eqref{equ-roughConverge} there exists $C>0$ uniform to $|x|>2$ such that
\begin{equation*}
\left\|v_{R}\right\|_{C^{0}\left(\overline{B_{2}}\right)} \leq C R^{-\varepsilon}.
\end{equation*}
Hereinafter, we set $F(M):=\det M$, $D_{M_{ij}}F(M)$ denote the partial derivative of $F$ with respect to $M_{ij}$ position and  $D_{M_{ij},M_{kl}}F(M)$ denote the partial derivative of $F$ with respect to $M_{ij}, M_{kl}$ positions.
Applying Newton--Leibnitz formula between \eqref{equ:scaled} and $F(A)=f(\infty)$, we have
\begin{equation}\label{equ:temp}
\widetilde{a_{ij}}(y)D_{ij}v_R=f_R(y)-f(\infty)\quad\text{in }B_2,
\end{equation}
where $\widetilde{a_{ij}}(y)=\int_0^1D_{M_{ij}}F(A+tD^2v_R(y)) \mathrm{d} t$. Since $F$ is smooth, by \eqref{equ:boundScleC2Alpha} we have $C>1$ uniform to $R>2$ such that
\begin{equation*}
\frac{I}{C} \leq \widetilde{a_{i j}} \leq C I \quad \text {in } B_{1} \quad\text{and}\quad\left\|\widetilde{a_{i j}}\right\|_{C^{\alpha}\left(\overline{B_{1}}\right)} \leq C.
\end{equation*}

By interior Schauder estimates as Theorem 6.2 of \cite{Book-Gilbarg-Trudinger}, we have
\begin{equation}\label{equ-interiorSchauder}
\begin{array}{llll}
\left\|v_{R}\right\|_{C^{2, \alpha}\left(\overline{B_{\frac{1}{2}}}\right)} &\leq & C\left(\left\|v_{R}\right\|_{C^0\left(\overline{B_{1}}\right)}+\left\|f_{ R}-f(\infty)\right\|_{C^{\alpha}\left(\overline{B_{1}}\right)}\right)\\
& \leq & C R^{-\min\{\varepsilon,\zeta\}}.\\
\end{array}
\end{equation}
Higher order derivative estimates follow by further differentiating the equation and interior Schauder estimates.  More rigorously, for any $e\in\partial B_1$, by taking partial derivative to $F(A+D^2v_R(y))=f_R(y)$, we have
\begin{equation}\label{equ:temp-2}
\widehat{a_{ij}}(y)D_{ije}v_R=D_ef_R(y)\quad\text{in }B_2,
\end{equation}
where $\widehat{a_{ij}}(y)=D_{M_{ij}}F(A+D^2v_R(y))$.
By condition \eqref{equ:cond-f},
\[
||f_R-f(\infty))||_{C^{k,\alpha}(\overline{B_1})}\leq CR^{-\zeta},\quad\forall~k=0,1,\cdots,m-1.
\]
By \eqref{equ:boundScleC2Alpha}, since  $F$ is smooth and uniformly elliptic, we may apply interior Schauder estimate to \eqref{equ:temp-2} and obtain
\[
||D_ev_R||_{C^{2,\alpha}(\overline{B_{\frac{1}{4}}})}\leq CR^{-\min\{\epsilon,\zeta\}}.
\]
By taking partial derivative once again,
\[
\hat{a_{ij}}(y)D_{ijee}v_R+D_{M_{ij},M_{kl}}F(A+D^2v_R(y))D_{ije}v_RD_{kle}v_R=D_{ee}f_R(y)\quad\text{in }B_2.
\]
Since $F$ is smooth, condition \eqref{equ:cond-f} and the estimate above provides
$$
\left\Vert
D_{ee}f_R-D_{M_{ij},M_{kl}}F(A+D^2v_R(y))D_{ije}v_RD_{kle}v_R
\right\Vert_{C^{\alpha}(\overline{B_{\frac{1}{4}}})}\leq CR^{-\min\{\epsilon,\zeta\}}
$$
for some $C>0$ for all $R>2$.
By taking further derivatives and iterate, for all $k=0,1,\cdots,m+1$, there exists $C>0$ such that
\[
|D^{k}v_R(0)|\leq CR^{-\min\{\epsilon,\zeta\}}
\]
for all $R>2$.
From the proof in Theorem \ref{thm:quadratic-ori}, we have $\epsilon<\frac{\zeta}{3}<\zeta$ and then \eqref{equ:rough-derivat} follows by scaling back.
\end{proof}

Now we are ready to prove Theorem \ref{thm:main-n>3}.
\begin{proof}[Proof of Theorem \ref{thm:main-n>3}]
  Applying Newton--Leibnitz formula between Eq.~\eqref{equ:MA} and $\det A=f(\infty)$, we obtain  that $v:=u-\frac{1}{2}x^TAx$ satisfies
\begin{equation}\label{linearized-equation-3}
 \overline{a_{i j}}(x) D_{i j} v:=\int_{0}^{1} D_{M_{i j}} F\left(t D^{2} v+A\right) \mathrm{d} t \cdot D_{i j} v
 =f(x)-f(\infty)=:
\overline{f}(x).
\end{equation}
For any $e\in\partial B_1$, by the concavity of operator $F$, we act partial derivative $D_e$ and $D_{ee}$ to Eq.~\eqref{equ:MA} and obtain
\begin{equation}\label{linearized-equation-1}
  \widehat{a_{ij}}(x)D_{ije}v:=D_{M_{i j}} F\left(  D^{2} v+A\right) D_{i je} v=D_ef(x),
\end{equation}
and
\begin{equation}\label{linearized-equation-2}
\widehat{a_{ij}}(x)D_{i jee} v \geq D_{e e}f(x).
\end{equation}

By \eqref{equ:boundedC2Alpha} and \eqref{equ:rough-derivat} from Lemma \ref{lem:initial-n>3}, we have $C>0$ such that
\[
\left| \overline { a _ { i j } } ( x ) - D_{M_{ij}}F  (A ) \right| +\left| \widehat { a _ { i j } } ( x ) - D_{M_{ij}}F (A) \right| \leq \frac { C } { | x | ^ { \epsilon} }.
\]
By condition \eqref{equ:cond-f}, we have $D_{ee}f=O(|x|^{-2-\zeta})$ as $|x|\rightarrow+\infty$. By constructing barrier functions (see for instance \cite{Li-Li-Yuan-BernsteinThm,Jia-Xiaobiao-AsymGeneralFully}), there exists $C>0$ such that for all $x\in\mathbb R^n$,
\[
D_ { e e }v ( x ) \leq
\left\{
\begin{array}{llll}
C | x | ^ {2-\min\{n,\zeta+2\}}, & \zeta\not=n-2,\\
C|x|^{2-n}(\ln|x|), & \zeta=n-2.\\
\end{array}
\right.
\]
By the arbitrariness of $e$,
\[
\lambda_{\max }\left(D^{2} v\right)(x) \leq \left\{
\begin{array}{llll}
C | x | ^ {2-\min\{n,\zeta+2\}}, & \zeta\not=n-2,\\
C|x|^{2-n}(\ln|x|), & \zeta=n-2.\\
\end{array}
\right.
\]
By condition \eqref{equ:cond-f} and the ellipticity of Eq.~ (\ref{linearized-equation-3}),
\[
\lambda_{\min }\left(D^{2} v\right)(x) \geq-C \lambda_{\max }\left(D^{2} v\right)-C|\overline{f}(x)| \geq
\left\{
\begin{array}{llll}
-C | x | ^ {2-\min\{n,\zeta+2\}}, & \zeta\not=n-2,\\
-C|x|^{2-n}(\ln|x|), & \zeta=n-2.\\
\end{array}
\right.
\]
Hence  there exists $C>0$ such that

\begin{equation}\label{equ-VaniRate-Hessian}
\left| D ^ { 2 } v ( x ) \right| \leq
\left\{
\begin{array}{llll}
C | x | ^ {2-\min\{n,\zeta+2\}}, & \zeta\not=n-2,\\
C|x|^{2-n}(\ln|x|), & \zeta=n-2.\\
\end{array}
\right.\end{equation}
Rewrite \eqref{linearized-equation-3} into
$$
D_{M_{ij}}F(A)D_{ij}v=\overline f(x)+\left(
\overline{a_{ij}(x)}-D_{M_{ij}}F(A)
\right)D_{ij}v(x)=:g(x)
$$
in $\mathbb R^n$.
Let
\begin{equation}\label{equ:defni-Q}
Q:=
[D_{M_{i j}} F(A)]^{\frac{1}{2}}\quad\text{and}\quad \widetilde v(x):=v(Qx).
\end{equation}
Since trace is invariant under cyclic permutations, we have
\begin{equation}\label{temp-1}
\Delta \widetilde v(x)=g(Qx)=:\widetilde g(x)\quad\text{in } \mathbb R^n.
\end{equation}

If $0<\zeta\leq 1$, then \eqref{equ-VaniRate-Hessian} becomes
$$
\left|D^2v(x)\right|=
\left\{
\begin{array}{llll}
O( | x | ^ {-\zeta}), & \text{if }0<\zeta<1,\\
& \text{or }\zeta=1\text{ and }n\geq4,\\
O(|x|^{-1}(\ln|x|)), & \text{if }\zeta=1\text{ and }n=3.\\
\end{array}
\right.
$$
By a direct computation, it yields
$$
|v(x)|, |\widetilde v(x)|=
\left\{
\begin{array}{llll}
O_2(| x | ^ {2-\zeta}), & \text{if }0<\zeta<1,\\
O_2(|x|(\ln|x|)),& \text{if }\zeta=1\text{ and }n\geq4,\\
O_2(|x|(\ln|x|)^2), & \text{if }\zeta=1\text{ and }n=3.\\
\end{array}
\right.
$$
By letting $v_R$ as in the proof of Lemma \ref{lem:initial-n>3} and applying interior Schauder estimates, we have
$$
||D_ev_R||_{C^{2,\alpha}(\overline{B_{\frac{1}{4}}})}\leq
\left\{
\begin{array}{lllll}
CR^{-\zeta}, & \text{if }0<\zeta<1,\\
CR^{-1}(\ln R), & \text{if }\zeta=1\text{ and }n\geq 4,\\
CR^{-1}(\ln R)^2, & \text{if }\zeta=1\text{ and }n=3,\\
\end{array}
\right.
$$
for some $C>0$ for all $R>2$ and $e\in\partial B_1$. By applying higher order derivatives to \eqref{equ:temp-2} and iterate, we have estimates of higher order derivatives and obtain
$$
\widetilde g(x)=O_m(|x|^{-\zeta})+\left\{
\begin{array}{llll}
O_{m-1}( | x | ^ {-2\zeta}), & \text{if }0<\zeta<1,\\
O_{m-1}(|x|^{-2}(\ln|x|)^2)& \text{if }\zeta=1\text{ and }n\geq4,\\
O_{m-1}(|x|^{-2}(\ln|x|)^4), & \text{if }\zeta=1\text{ and }n=3.\\
\end{array}
\right.
$$
By Lemmas \ref{Lem-existence-fastdecay} and \ref{lem-existence-fastd-deri}, there exists a solution $v_{\widetilde g}$ of \eqref{temp-1} on $\mathbb R^n\setminus \overline{B_1}$ with
$$
\widetilde v_{\widetilde g}(x)=\left\{
\begin{array}{llll}
  O_{m}(|x|^{2-\zeta}), & \text{if }0<\zeta<1,\\
  O_{m}(|x|(\ln|x|)), & \text{if }\zeta=1.\\
\end{array}
\right.
$$
Since $\widetilde v-\widetilde v_{\widetilde g}$ is harmonic on  $\mathbb R^n\setminus\overline{B_1}$ and $\widetilde v-\widetilde v_{\widetilde g}=o(|x|^2)$ as $|x|\rightarrow\infty$, by spherical harmonic decomposition we have
$$
\widetilde v-\widetilde v_g=O_l(|x|)\quad\text{as }|x|\rightarrow+\infty
$$
for any $l\in\mathbb N$. By rotating back and applying interior Schauder estimates again, we have the desired results.

If $1<\zeta\leq 2$, then \eqref{equ-VaniRate-Hessian} implies   $|D^2v|=O(|x|^{-1})$ as $|x|\rightarrow\infty$.
Since  $D_ef=O_{m-1}(|x|^{-\zeta-1})$ at infinity and the coefficients of Eq.~\eqref{linearized-equation-1} has uniformly bounded $C^{\alpha}$ norm,
by Theorem \ref{thm:linearLimit} there
exists $ b _e\in\mathbb R$ such that

\begin{equation}\label{capture1-order}  D_ { e }v ( x ) =  b _ { e } +
\left\{
\begin{array}{llll}
O \left( | x | ^ { 2 - \min\{n,\zeta+1\}} \right), & \zeta\not=n-1,\\
O \left( | x | ^ { 2 - n}(\ln|x|) \right), & \zeta=n-1,\\
\end{array}
\right.
\quad\text{as } | x | \rightarrow \infty.
\end{equation}
Picking $e$ as $n$ unit coordinate vectors of $\mathbb R^n$, we found $ b \in\mathbb R^n$ from \eqref{capture1-order} and let

$$w( x ): = v ( x ) -  b \cdot  x = u ( x ) - \left( \frac { 1 } { 2 } x ^ { T } A x +  b \cdot  x \right).$$
By (\ref{capture1-order}), since $n-1\geq 2$ and $1<\zeta\leq 2$,
\begin{equation}\label{equ-VaniRate-Gradient}
|Dw(x)|=|(D_{x_1}v- b _1,\cdots,D_{x_n}v- b _n)|=
\left\{
\begin{array}{lllll}
O(|x|^{1-\zeta}), & \text{if }1<\zeta<2,\\
& \text{or }\zeta=2\text{ and }n>3,\\
O(|x|^{-1}(\ln|x|)), & \text{if }\zeta=2~\text{and }n=3,\\
\end{array}
\right.
\end{equation}
as $|x|\rightarrow\infty$.
By a direct computation, \eqref{equ-VaniRate-Gradient} yields
\[
|w(x)|=\left\{
\begin{array}{lllll}
O_1(|x|^{2-\zeta}), & \text{if }1<\zeta<2,\\
O_1(\ln|x|),& \text{if }\zeta=2\text{ and }n>3,\\
O_1((\ln|x|)^2), & \text{if }\zeta=2~\text{and }n=3,\\
\end{array}
\right.
\]
as $|x|\rightarrow\infty$. Similar to the proof of Lemma \ref{lem:initial-n>3} we set
\[
w_{R}(y):=\left(\frac{4}{R}\right)^{2} w\left(x+\frac{R}{4} y\right), \quad|y| \leq 2.
\]
Then
\[D^2w_R(y)=D^2w(x+\frac{R}{4}y)=D^2v_R(y)\quad\text{and}\quad
F(A+D^2w_R(y))=f_R(y)\quad\text{in }B_2.
\]
For any $e\in\partial B_1$, by taking partial derivative to the equation above, we have
\[
\hat{a_{ij}}(y)D_{ije}w_R(y)=D_ef_R(y)\quad\text{in }B_2,
\]
where the coefficients are uniformly (to $R$) elliptic with uniform $C^{\alpha}$-norm in $B_1$. By interior Schauder estimate and taking further derivatives, there exists $C>0$ independent of $R$ such that
\[
|D^{k}w_R(0)|\leq \left\{
\begin{array}{llll}
  CR^{-\zeta}, & \text{if }1<\zeta<2,\\
  CR^{-2}\ln R, & \text{if }\zeta=2\text{ and }n> 3,\\
  CR^{-2}(\ln R)^2, & \text{if }\zeta=2\text{ and }n=3,\\
\end{array}
\right.
\]
for all $k=0,1,\cdots,m+1$. Similar to the previous case, we set $Q$ as in \eqref{equ:defni-Q} and $\tilde v(x):=w(Qx)$. Then by the computation above we have
$$
\widetilde g(x)=O_m(|x|^{-\zeta})+\left\{
\begin{array}{llll}
O_{m-1}(|x|^{-2\zeta}), & \text{if }1<\zeta<2,\\
O_{m-1}(|x|^{-4}(\ln|x|)^2)  &\text{if }\zeta=2\text{ and }n>3,\\
  O_{m-1}(|x|^{-4}(\ln|x|)^4), & \text{if }\zeta=2\text{ and }n=3.\\
\end{array}
\right.
$$
By Lemmas \ref{Lem-existence-fastdecay} and \ref{lem-existence-fastd-deri}, there exists a solution $v_{\widetilde g}$ of \eqref{temp-1} on $\mathbb R^n\setminus \overline{B_1}$ with
$$
\widetilde v_{\widetilde g}(x)=\left\{
\begin{array}{llll}
  O_{m}(|x|^{2-\zeta}), & \text{if }1<\zeta<2,\\
  O_{m}((\ln|x|)), & \text{if }\zeta=2.\\
\end{array}
\right.
$$
Since $\widetilde v-\widetilde v_{\widetilde g}$ is harmonic on  $\mathbb R^n\setminus\overline{B_1}$ and $\widetilde v-\widetilde v_{\widetilde g}=o(|x|)$ as $|x|\rightarrow\infty$, by spherical harmonic decomposition we have
$$
\widetilde v-\widetilde v_g=O_l(1)\quad\text{as }|x|\rightarrow+\infty
$$
for any $l\in\mathbb N$.
By rotating back and applying interior Schauder estimates again, we obtain the results in  $1<\zeta\leq 2$ cases in \eqref{equ-beha-n>3}.
\end{proof}

\section{Proof for $n=2$ case}\label{seclabel-n=2}

In this section, we prove Theorem \ref{thm:main-n=2}. In $n=2$ case, since  Theorem \ref{thm:linearLimit} may fails,  we apply the iterate method as in \cite{Bao-Li-Zhang-ExteriorBerns-MA,Liu-Bao-2021-Dim2-MeanCur} etc. For reading simplicity, we introduce the following results.

\begin{lemma}\label{lem:initial-n=2}
  Let $u,f$ be as in Theorem \ref{thm:main-n=2}, $A,\epsilon$ be as in  \eqref{equ-roughConverge}  and $w:=u-\frac{1}{2}x^TAx$. Then there exist $C,\alpha$ and $\epsilon'>0$ such that
    \begin{equation}\label{Chap6-Asy-Initial}
|D^kw(x)|\leq C|x|^{2-k-\epsilon'}\quad\text{and}\quad
\dfrac{|D^{m+1}w(x_1)-D^{m+1}(x_2)|}{|x_1-x_2|^{\alpha}}\leq C|x_1|^{1-m-\epsilon'-\alpha}
\end{equation}
for all  $ | x | > 2, k = 0 , \cdots , m + 1$ and $\left| x_ { 1 } \right| > 2,x _ { 2 } \in B _ { \left| x _ { 1} \right| / 2 } \left( x _ { 1 } \right)$.

Furthermore, we have an iterative structure that if \eqref{Chap6-Asy-Initial} holds for some $0<\epsilon'<\min\{\frac{\zeta}{2},\frac{1}{2}\}$, then it holds also for $\epsilon'$ replaced by $2\epsilon'$ with another constant $C$.
\end{lemma}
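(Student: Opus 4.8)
The plan is to reuse the rescaling argument of Lemma~\ref{lem:initial-n>3} to obtain \eqref{Chap6-Asy-Initial} for some $\epsilon'>0$, and then to establish the doubling structure by linearising \eqref{equ:MA}, passing to a Poisson equation on an exterior domain, solving it with Lemmas~\ref{Lem-existence-fastdecay} and~\ref{lem-existence-fastd-deri}, correcting by an exterior harmonic function, and finally feeding the improved decay rate back into interior Schauder estimates.

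\emph{Step 1 (initial bound).} For $R:=|x|>2$ put $u_R(y):=(4/R)^2u(x+\frac R4 y)$ and $w_R(y):=(4/R)^2w(x+\frac R4 y)$ on $B_2$, so that $\det(A+D^2w_R)=\det D^2u_R=f_R(y):=f(x+\frac R4 y)$. By \eqref{equ-roughConverge}, $\|w_R\|_{C^0(\overline{B_2})}\le CR^{-\epsilon}$, and by \eqref{equ:cond-f}, $\|f_R-f(\infty)\|_{C^{m-1,\alpha}(\overline{B_2})}\le CR^{-\zeta}$, uniformly in $R$. The interior estimates of Caffarelli and Jian--Wang then bound $\|D^2u_R\|_{C^\alpha(\overline{B_1})}$ uniformly; a Newton--Leibnitz linearisation against $\det A=f(\infty)$ gives a uniformly elliptic equation $\widetilde{a_{ij}}D_{ij}w_R=f_R-f(\infty)$ with uniformly bounded $C^\alpha$ coefficients, and iterating interior Schauder estimates (differentiating the equation up to order $m-1$, the quadratic-in-derivatives error terms being harmless, exactly as in Lemma~\ref{lem:initial-n>3}) gives $|D^kw_R(0)|\le CR^{-\min\{\epsilon,\zeta\}}$ for $k=0,\dots,m+1$ together with a uniform bound on $[D^{m+1}w_R]_{C^\alpha(\overline{B_{1/2}})}$. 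Since $\epsilon<\zeta/3<\zeta$, scaling back yields \eqref{Chap6-Asy-Initial} with $\epsilon'=\epsilon$.

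\emph{Step 2 (the doubling).} Assume now \eqref{Chap6-Asy-Initial} holds with $0<\epsilon'<\min\{\zeta/2,1/2\}$. Linearising \eqref{equ:MA} against $\det A=f(\infty)$ gives
\[
\overline{a_{ij}}(x)D_{ij}w=\overline f(x):=f(x)-f(\infty),\qquad
\overline{a_{ij}}(x):=\int_0^1D_{M_{ij}}F\big(A+tD^2w(x)\big)\,\mathrm dt,
\]
and, since $F$ is smooth and $|D^kw(x)|\le C|x|^{2-k-\epsilon'}$ for $2\le k\le m+1$ (with the H\"older control of $D^{m+1}w$), one gets $\overline{a_{ij}}-D_{M_{ij}}F(A)=O_{m-1}(|x|^{-\epsilon'})$. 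Rewriting
\[
D_{M_{ij}}F(A)\,D_{ij}w=\overline f(x)+\big(D_{M_{ij}}F(A)-\overline{a_{ij}}(x)\big)D_{ij}w=:g(x),
\]
condition \eqref{equ:cond-f} gives $\overline f=O_m(|x|^{-\zeta})$, while the second term is a product $O_{m-1}(|x|^{-\epsilon'})\cdot O_{m-1}(|x|^{-\epsilon'})$; since $2\epsilon'<\zeta$, this yields $g=O_{m-1}(|x|^{-2\epsilon'})$. With $Q:=[D_{M_{ij}}F(A)]^{1/2}$ and $\widetilde w(x):=w(Qx)$, cyclic invariance of the trace gives $\Delta\widetilde w=\widetilde g(x):=g(Qx)=O_{m-1}(|x|^{-2\epsilon'})$ on $\mathbb R^2\setminus\overline{B_1}$. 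Because $0<2\epsilon'<1$, in particular $2\epsilon'\notin\mathbb N$, Lemmas~\ref{Lem-existence-fastdecay} and~\ref{lem-existence-fastd-deri} produce a solution $v_{\widetilde g}$ of $\Delta v=\widetilde g$ with $v_{\widetilde g}=O_m(|x|^{2-2\epsilon'})$ and no logarithmic factor. Then $\widetilde w-v_{\widetilde g}$ is harmonic on the exterior domain and $o(|x|^2)$, so its plane-harmonic expansion contains no term of order $|x|^k$ with $k\ge2$; hence it equals a constant plus a multiple of $\ln|x|$ plus a linear function plus an $O_\ell(|x|^{-1})$ remainder for every $\ell$, all of which are $O_\ell(|x|^{2-2\epsilon'})$ precisely because $2-2\epsilon'>1$. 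Undoing $Q$ gives $|w(x)|\le C|x|^{2-2\epsilon'}$, and feeding this improved $C^0$-rate back into the rescaling of Step~1 (now $\|w_R\|_{C^0(\overline{B_2})}\le CR^{-2\epsilon'}$, $\|f_R-f(\infty)\|_{C^{m-1,\alpha}}\le CR^{-\zeta}$ with $2\epsilon'<\zeta$) and repeating the interior-estimate argument upgrades it to $|D^kw(x)|\le C|x|^{2-k-2\epsilon'}$, $k=0,\dots,m+1$, together with the H\"older bound on $D^{m+1}w$, i.e.\ \eqref{Chap6-Asy-Initial} with $\epsilon'$ replaced by $2\epsilon'$.

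\emph{The main obstacle.} The delicate point is the control of logarithmic factors in the doubling step, which is exactly what determines the admissible range $\epsilon'<\min\{\zeta/2,1/2\}$: the inequality $2\epsilon'<\zeta$ keeps the quadratic-in-$Dw$ error term from overtaking $\overline f$ and keeps the Poisson right-hand side of order exactly $|x|^{-2\epsilon'}$, while $2\epsilon'<1$ both keeps $2\epsilon'$ off the positive integers---so the exterior Poisson solver of Lemma~\ref{Lem-existence-fastdecay} produces no logarithmic loss in the plane, where it is largest---and forces $2-2\epsilon'>1$, so that the unavoidable linear (and logarithmic) growth of the exterior harmonic correction is still dominated by the target rate $|x|^{2-2\epsilon'}$. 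This last point is what distinguishes $n=2$ from the $n\ge3$ case treated by Theorem~\ref{thm:linearLimit}, where such a harmonic correction is simply a constant; here one genuinely cannot discard it and must iterate. One must also verify that $\overline{a_{ij}}-D_{M_{ij}}F(A)$ decays like $|x|^{-\epsilon'}$ together with its derivatives up to order $m-1$ and that the final re-rescaling restores the full $C^{m+1,\alpha}$ control, so that the iteration can be repeated; both rely on $f\in C^m$ with $m$ as in Theorem~\ref{thm:main-n=2}.
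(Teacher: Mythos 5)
Your proposal is correct, and Step 1 is exactly the route the paper intends: the initial estimate \eqref{Chap6-Asy-Initial} is obtained, as in Lemma \ref{lem:initial-n>3}, by rescaling on $B_2$, invoking the Caffarelli and Jian--Wang interior estimates to get uniform $C^{2,\alpha}$ control, and then iterating interior Schauder estimates on the differentiated equation. For the doubling, the paper itself gives no argument and simply cites Lemma 2.2 of Bao--Li--Zhang; you instead reprove it with the machinery of Sections \ref{seclabel-Poissonequ}--\ref{seclabel-n=2} (Newton--Leibnitz linearization, rotation by $Q=[D_{M_{ij}}F(A)]^{1/2}$, the exterior Poisson solver of Lemmas \ref{Lem-existence-fastdecay} and \ref{lem-existence-fastd-deri}, the exterior harmonic expansion, and a Schauder feedback), which is the same scheme the paper runs in its proofs of Theorems \ref{thm:main-n>3} and \ref{thm:main-n=2}, and your bookkeeping is right: $2\epsilon'<\zeta$ keeps the quadratic error below $\overline f$, and $2\epsilon'<1$ both avoids the logarithmic loss in \eqref{equ:def-expone-k} (since $k_1=2\epsilon'\notin\mathbb N$) and makes the linear-plus-logarithmic harmonic correction $O(|x|^{2-2\epsilon'})$, after which the rescaled Schauder chain restores the full $C^{m+1,\alpha}$ statement. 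Two cosmetic points: Lemma \ref{Lem-existence-fastdecay} is stated for smooth $g$, whereas your $\widetilde g$ has only finite regularity --- this is harmless since only the spherical $L^p$ bound is used, and the paper applies the lemma in the same way; and your write-up does not make visible where $m\geq 3$ (rather than $m\geq 2$) is used, which the paper emphasizes for the cited iteration lemma --- this is not an error, since you assume the hypotheses of Theorem \ref{thm:main-n=2}, but it marks the one place where your reconstruction may genuinely differ from the argument of Bao--Li--Zhang that the paper defers to.
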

The proof of \eqref{Chap6-Asy-Initial} is omitted here since it is similar to Lemma 2.1 in \cite{Bao-Li-Zhang-ExteriorBerns-MA} or Lemma 4.1 in \cite{Liu-Bao-2021-Dim2-MeanCur}, which is based only on the interior estimates by Caffarelli \cite{Caffarelli-InteriorEstimates-MA} and Jian--Wang \cite{Jian-Wang-ContinuityEstimate-MA} and interior Schauder estimates.
The proof of iterative structure can be found as  Lemma 2.2 in \cite{Bao-Li-Zhang-ExteriorBerns-MA}, which relies on the assumption that $m\geq 3$ and is different from higher dimension case.

Now we are ready to prove Theorem \ref{thm:main-n=2} by the iterative structure above.
\begin{proof}[Proof of Theorem \ref{thm:main-n=2}]
  By Lemma \ref{lem:initial-n=2}, there exist $\alpha$ and $\epsilon'>0$ such that \eqref{Chap6-Asy-Initial} holds.

  If $0<\zeta\leq 1$, we let $p_1\in\mathbb N$ be the positive integer such that
\[
  2^{p_1}\epsilon'<\zeta\quad\text{and}\quad \zeta<2^{p_1+1}\epsilon'<2\zeta.
\]
  (If necessary, we may choose $\epsilon'$ smaller to make both inequalities hold.) Let $\epsilon_1:=2^{p_1}\epsilon'$. Applying the iterative structure in Lemma \ref{lem:initial-n=2} $p_1$ times, we have
  \begin{equation}\label{Chap6-equ-converge5}
  |D^kw(x)|\leq C|x|^{2-k-\epsilon_1}\quad\text{and}\quad
\dfrac{|D^{m+1}w(x_1)-D^{m+1}(x_2)|}{|x_1-x_2|^{\alpha}}\leq C|x_1|^{1-m-\epsilon_1-\alpha}
  \end{equation}
  for all  $ | x | > 2, k = 0 , \cdots , m + 1$ and $\left| x_ { 1 } \right| > 2,x _ { 2 } \in B _ { \left| x _ { 1} \right| / 2 } \left( x _ { 1 } \right)$.

  Applying Newton--Leibintz formula between Eq.~\eqref{equ:MA} and $F(A)=f(\infty)$, we have
  $$
  D_{M_{ij}}F(A)D_{ij}w=f(x)-f(\infty)+\left(\widetilde{a_{ij}}(x)-\widetilde{a_{ij}}(\infty)\right)D_{ij}w=:g_1(x)
  $$
  in $\mathbb R^2$, where $w$ is defined as in Lemma \ref{lem:initial-n=2},   the coefficients are uniformly elliptic and
  $$
  \widetilde{a_{ij}}(x)=\int_0^1D_{M_{ij}}F(A+tD^2w(x)){\mathrm{d}}t=D_{M_{ij}}F(A)+O_{m-1}(|x|^{-\epsilon_1})
  $$
  as $|x|\rightarrow\infty$. Let $Q:=[D_{M_{ij}}F(A)]^{\frac{1}{2}}$ and $\widetilde w(x):=w(Qx)$. By the invariance of trace under cyclic permutations again, we have
  \begin{equation}\label{equ-temp-5}
  \Delta\widetilde w=\widetilde g_1(x):=g_1(Qx).
  \end{equation}
  By the definition of $\widetilde a_{ij}(x)$, condition \eqref{equ:cond-f} on $f$ and \eqref{Chap6-equ-converge5} we have
  $$
  \widetilde g_1(x)=O_{m}(|x|^{-\zeta})+O_{m-1}(|x|^{-2\epsilon_1})=O_{m-1}(|x|^{-\zeta})
  $$
  as $|x|\rightarrow\infty$. By Lemmas \ref{Lem-existence-fastdecay} and \ref{lem-existence-fastd-deri}, there exists a function $\widetilde w_{\widetilde g_1}$ solving \eqref{equ-temp-5} on $\mathbb R^2\setminus\overline{B_1}$ such that
  $$
  \widetilde w_{\widetilde g_1}(x)=
  \left\{
  \begin{array}{lllll}
  O_m(|x|^{2-\zeta}), & \text{if }0<\zeta<1,\\
  O_m(|x|^{2-\zeta}(\ln|x|)), & \text{if }\zeta=1,\\
  \end{array}
  \right.
  $$
  as $|x|\rightarrow\infty$. Since $\widetilde g_1-\widetilde w_{\widetilde g_1}$ is harmonic on  $\mathbb R^2\setminus\overline{B_1}$ and $\widetilde g_1-\widetilde w_{\widetilde g_1}=o(|x|^2)$ at infinity, by spherical harmonic decomposition we have
  $$
  \widetilde g_1-\widetilde w_{\widetilde g_1}=O_l(|x|)\quad\text{as }|x|\rightarrow\infty
  $$
  for any $l\in\mathbb N$.
  By rotating back and applying  interior Schauder estimates as in the proof of Theorem \ref{thm:main-n>3}, we finish the proof of $0<\zeta\leq 1$ cases in \eqref{equ-beha-n=2}.

  If $1<\zeta\leq 2$, we let $p_2\in\mathbb N$ be the positive integer such that $$
  2^{p_2}\epsilon'<1\quad\text{and}\quad 1<2^{p_2+1}\epsilon'<2.
  $$
  (If necessary, we may choose $\epsilon'$ smaller to make both inequalities hold.)
  Let $\epsilon_2:=2^{p_2}\epsilon'$.
  Applying the iterative structure in Lemma \ref{lem:initial-n=2} $p_2$ times, we have \eqref{Chap6-equ-converge5} with $\epsilon_1$ replaced by $\epsilon_2$ for all  $ | x | > 2, k = 0 , \cdots , m + 1$ and $\left| x_ { 1 } \right| > 2,x _ { 2 } \in B _ { \left| x _ { 1} \right| / 2 } \left( x _ { 1 } \right)$.

  Similar to the strategy above we apply Newton--Leibnitz formula and rotation $Q:=[D_{M_{ij}}F(A)]^{\frac{1}{2}}$ to obtain \eqref{equ-temp-5}. By the definition of $\widetilde a_{ij}(x)$, condition \eqref{equ:cond-f} on $f$ and \eqref{Chap6-equ-converge5} we have
  $$
  \widetilde g_1(x)=O_{m}(|x|^{-\zeta})+O_{m-1}(|x|^{-2\epsilon_2})=O_{m-1}(|x|^{-\min\{\zeta,2\epsilon_2\}})
  $$
  as $|x|\rightarrow\infty$. By Lemmas \ref{Lem-existence-fastdecay}  and \ref{lem-existence-fastd-deri}, there exists a function $\widetilde w_{\widetilde g_1}$ solving \eqref{equ-temp-5} on $\mathbb R^2\setminus\overline{B_1}$ such that
  $$
  \widetilde w_{\widetilde g_1}(x)=O_m(|x|^{2-\min\{\zeta,2\epsilon_2\}})
  $$
  as $|x|\rightarrow\infty$. Since $\widetilde w-\widetilde w_{\widetilde g_1}$ is harmonic on  $\mathbb R^2\setminus\overline{B_1}$ and $\widetilde w-\widetilde w_{\widetilde g_1}=o(|x|^2)$ at infinity, by spherical harmonic decomposition we have $\widetilde b \in\mathbb R^2$ such that
  $$
  \widetilde w-\widetilde w_{\widetilde g_1}=\widetilde b \cdot x+O_l(\ln|x|)\quad\text{as }|x|\rightarrow\infty
  $$
  for any $l\in\mathbb N$. Consequently, by setting
  $$
  \widetilde w_1(x):=\widetilde w(x)-\widetilde b \cdot x,
  $$
  we have
  $$
  \begin{array}{lllll}
    \widetilde w_1(x) &= & O_l(\ln|x|)+O_m(|x|^{2-\min\{\zeta,2\epsilon_2\}})\\
    &=& O_m(|x|^{2-\min\{\zeta,2\epsilon_2\}})\\
  \end{array}
  $$
  as $|x|\rightarrow\infty$. From the proof of \eqref{temp-1}, by a direct computation and interior estimates we have
  $$
  \Delta\widetilde w_1=\widetilde g_2(x)=O_m(|x|^{-\zeta})+O_{m-1}(|x|^{-2\min\{\zeta,2\epsilon_2\}})
  $$
  in $\mathbb R^2\setminus\overline{B_1}$. Since $$
  1<\zeta\leq 2<2\min\{\zeta,2\epsilon_2\},
  $$ by Lemmas  \ref{Lem-existence-fastdecay} and \ref{lem-existence-fastd-deri}, there exists a function $\widetilde w_{\widetilde g_2}$ solving \eqref{equ-temp-5} on $\mathbb R^2\setminus\overline{B_1}$ such that
  $$
  \widetilde w_{\widetilde g_2}(x)=
  \left\{
  \begin{array}{lll}
  O_m(|x|^{2-\zeta}), & \text{if }1<\zeta<2,\\
  O_m( (\ln|x|)^2), & \text{if }\zeta=2,\\
  \end{array}
  \right.
  $$
  as $|x|\rightarrow\infty$. Since $\widetilde w_1-\widetilde w_{g_2}$ is harmonic on $\mathbb R^2\setminus\overline{B_1}$ and $\widetilde w_1-\widetilde w_{g_2}=o(|x|)$ at infinity, by spherical harmonic decomposition we have
  $$
  \widetilde w_1-\widetilde w_{g_2}=O_l(\ln|x|)\quad\text{as }|x|\rightarrow\infty
  $$
  for all $l\in\mathbb N$.
  By rotating back and applying interior Schauder estimates as in the proof of Theorem \ref{thm:main-n>3}, we finish the proof of  \eqref{equ-beha-n=2}.
\end{proof}

\small

\bibliographystyle{plain}

\bibliography{C:/Bib/Thesis}

\bigskip

\noindent Z.Liu \& J. Bao

\medskip

\noindent  School of Mathematical Sciences, Beijing Normal University\\
Laboratory of Mathematics and Complex Systems, Ministry of Education\\
Beijing 100875, China \\[1mm]
Email: \textsf{liuzixiao@mail.bnu.edu.cn, jgbao@bnu.edu.cn}

\end{document}